\documentclass{amsart}
\usepackage{amsfonts,amssymb, wasysym}
\usepackage{multicol}
\usepackage{hyperref}
\usepackage{graphicx}
\hypersetup{colorlinks,citecolor=blue,linkcolor=blue}

\newtheorem{theorem}{Theorem}
\newtheorem{lemma}{Lemma}

\newcommand{\integers}{{\mathbb Z}}
\newcommand{\realnos}{{\mathbb R}}

\begin{document}

\title{Cusp transitivity in hyperbolic 3-manifolds}

\author{John G. Ratcliffe and Steven T. Tschantz}

\address{Department of Mathematics, Vanderbilt University, Nashville, TN 37240
\vspace{.1in}}

\email{j.g.ratcliffe@vanderbilt.edu}

\date{}

\begin{abstract}
In this paper, we study multiply transitive actions of the group of isometries of a cusped finite-volume hyperbolic 3-manifold on the set of its cusps.  In particular, we prove a conjecture of Vogeler that 
there is a largest $k$ for which such $k$-transitive actions exist, and that for each $k \geq 3$, there 
is an upper bound on the possible number of cusps.  
\end{abstract}

\maketitle

\section{Introduction}\label{S:1} 

Let $S$ be a set and $k$ an integer such that $1 \leq k \leq |S|$. 
 An action of a group $G$ on $S$ 
is called $k$-{\it transitive} if for every choice of distinct elements $x_1,\ldots, x_k$ of $S$ and every choice 
of distinct targets $y_1,\ldots,y_k$ in $S$, there is an element $g$ of $G$ such that $gx_i = y_i$ for each $i = 1,\ldots, k$. 
The term {\it transitive} means 1-transitive, and actions with $k > 1$ are called {\it multiply transitive}. 

This paper is concerned with cusped hyperbolic 3-manifolds of finite volume whose group of isometries 
induces a multiply transitive action on the set of cusps of the manifold.  Call such a manifold $k$-transitive if the induced action is $k$-transitive.  All 3-manifolds in this paper are connected. 

Multiply transitive cusped hyperbolic 3-manifolds of finite volume have a high degree of symmetry and 
are therefore are some of the most beautiful cusped hyperbolic 3-manifolds of finite volume. 

Examples of $k$-transitive hyperbolic link complements are described by Vogeler in \cite{V} for $k = 1, 2, 3, 4$; moreover, Vogeler proved that there is no upper bound for the number of cusps for 
a 2-transitive hyperbolic 3-manifold of finite volume.  
Vogeler conjectured that there is a largest $k$ for which $k$-transitive manifolds exist, 
and for each $k \geq 3$ there is an upper bound on the possible number of cusps. 
In this paper, we prove Vogeler's conjecture; more precisely, we prove the following theorem: 

\begin{theorem}\label{T:1} 
Let $M$ be a $k$-transitive cusped hyperbolic 3-manifold of finite volume. 
Then the possible values of $k$ are $1,2, 3, 4$ and $5$; moreover, 
\begin{enumerate}
\item if $M$ is 5-transitive, then $M$ has exactly 5 cusps,  
\item if $M$ is 4-transitive and not 5-transitive, then $M$ has exactly 4 cusps, 
\item if $M$ is 3-transitive and not 4-transitive, then the possible numbers of cusps of $M$ are $3, 5, 6$ and $8$. 
\end{enumerate}
\end{theorem}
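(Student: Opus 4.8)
The plan is to translate the statement into a question about finite solvable multiply transitive permutation groups. Let $G=\mathrm{Isom}(M)$; it is finite because $M$ has finite volume. Replacing $G$ by its image under the action on the set of cusps, we may assume this action is faithful; it is $k$-transitive on a set of size $n$ equal to the number of cusps. Fix a cusp $c$ and let $G_c$ be its stabilizer, so $G_c$ is $(k-1)$-transitive on the remaining $n-1$ cusps. The first step is geometric: the full isometry-group stabilizer of $c$ acts faithfully on the flat cross-section $\Sigma_c$ of a horoball neighbourhood of $c$ --- faithfully because an isometry of the connected manifold $M$ that fixes a cusp and acts trivially on a horospherical cross-section fixes an open set and hence is the identity --- so it embeds in $\mathrm{Isom}(\Sigma_c)$. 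When $\Sigma_c$ is a flat torus, $\mathrm{Isom}(\Sigma_c)$ is an extension of a plane crystallographic point group $P$ --- one of $C_1,C_2,C_3,C_4,C_6,D_1,D_2,D_3,D_4,D_6$ --- by its abelian translation subgroup; when $\Sigma_c$ is a flat Klein bottle (possible when $M$ is non-orientable) it is even smaller. Consequently $G_c$, and hence also the stabilizer in the faithful action, is solvable and possesses a normal abelian subgroup of rank at most $2$ and index at most $12$.

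Bounding $k$ is then immediate: a solvable permutation group is never $5$-transitive --- up to isomorphism the only solvable $4$-transitive group is $S_4$ in its natural action on $4$ points, which follows from Huppert's classification of the solvable $2$-transitive permutation groups --- so the $(k-1)$-transitivity of $G_c$ gives $k-1\le 4$, that is, $k\le 5$. I emphasize that only this classical, CFSG-free classification is needed, since it is applied only to the solvable group $G_c$ and never to $G$ itself.

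For the cusp counts, assume $k\ge 3$, so $G_c$ is solvable and $2$-transitive of degree $m:=n-1$. By Huppert's theorem it is affine, $G_c=N\rtimes H$ with socle $N\cong\mathbb{F}_p^{\,d}$ of order $m$ and $H\le GL_d(p)$ acting transitively on $N\setminus\{0\}$, apart from finitely many exceptional solvable $2$-transitive groups of square degree, which are excluded because their point stabilizers have order $\ge 8$ and contain a quaternion subgroup or $SL_2(3)$ and so cannot embed in a plane point group. Let $A\triangleleft G_c$ be a normal abelian subgroup of rank $\le 2$ and index $\le 12$. If $A=1$ then $|G_c|\le 12$, forcing $m\le 4$, a case handled by hand; otherwise $A$ contains the unique minimal normal subgroup $N$, so $N$ has rank $\le 2$, i.e.\ $d\le 2$, and since $H$ acts faithfully on $N$ while the abelian group $A/N$ would act trivially, $A=N$, whence $H\cong G_c/N$ embeds in $P$ and $m-1\le|H|\le 12$. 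Thus $m=p^{\,d}\in\{2,3,4,5,7,9,11,13\}$, and checking for each value whether the required transitive subgroup $H\le GL_d(p)$ can be a plane point group --- the genuine exclusions being $m=9$, where $H$ would need order $8$ but no order-$8$ plane point group is transitive on the eight nonzero vectors of $\mathbb{F}_3^{2}$, and $m=11,13$, where $H$ would be cyclic of order $10$ or $12$ --- leaves $m\in\{2,3,4,5,7\}$, i.e.\ $n\in\{3,4,5,6,8\}$. The refinements are then routine: if $M$ is $3$- but not $4$-transitive then $n\ne4$ (a $3$-transitive group of degree $4$ is $S_4$, hence $4$-transitive), giving $n\in\{3,5,6,8\}$; if $M$ is $4$- but not $5$-transitive then $G_c$ is solvable $3$-transitive, hence $\cong S_3$ (degree $3$, $n=4$) or $S_4$ (degree $4$, $n=5$), and the latter forces $G\cong S_5$, hence $M$ is $5$-transitive, so $n=4$; if $M$ is $5$-transitive then $G_c$ is solvable $4$-transitive, hence $\cong S_4$ and $n=5$. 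Finally one verifies that each surviving value of $k$ occurs with each listed number of cusps: $k=1,2$ by manifolds with transitive, resp.\ arbitrarily many, cusps (Vogeler \cite{V}), $k=3$ and $k=4$ by Vogeler's link complements together with explicit examples where needed, and $k=5$ by exhibiting a $5$-cusped finite-volume hyperbolic $3$-manifold whose isometry group acts on its cusps as $S_5$.

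The main obstacle is the third step --- reconciling the two competing constraints on $G_c$: its $2$-transitivity forces it to be affine with a linear complement $H$ of order at least $m-1$, while the geometry forces that same $H$ to be realised inside $\mathrm{Isom}(\Sigma_c)$ and hence to be one of the ten plane point groups, of order at most $12$. Working through the resulting finite list, in particular the explicit verification that no suitable $H$ exists when $m=9$ and the bookkeeping for the small exceptional solvable $2$-transitive groups, is the technical heart of the argument; the remaining work is the (separate) geometric construction of a genuinely $5$-transitive example and the routine, but necessary, treatment of the non-orientable case, where $\Sigma_c$ is a flat Klein bottle and the analysis only produces a subset of the torus cases.
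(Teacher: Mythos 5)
Your proposal is correct and follows essentially the same route as the paper: the cusp stabilizer acts faithfully by isometries on a flat torus or Klein bottle cross--section, hence is solvable with an abelian normal subgroup of rank at most $2$ and quotient of order at most $12$, and combining this with the Huppert/Wielandt theory of solvable multiply transitive groups --- including the hands-on exclusion of subdegree $9$, where the complement must be $C_8$ or quaternion yet would have to live in a dihedral plane point group --- yields exactly the paper's list of degrees and the same deferral to explicit examples for realizability. The only differences are organizational (you bound the subdegree by $13$ and then discard $11$ and $13$, where the paper obtains $p-1\le 6$ directly, and you phrase the geometric constraint via plane point groups rather than finite subgroups of $\mathrm{GL}(2,\mathbb{Z})$), and neither changes the substance.
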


\section{Preliminary lemmas}\label{S:2} 

Let $C$ be a cusp of a hyperbolic 3-manifold $M$ of finite volume. 
A {\it link} $L$ of $C$ is a horospherical cross-section of $C$. 
A link $L$ of $C$ has a natural Euclidean metric, and so $L$ is either 
a flat torus or a flat Klein bottle.  We say that $C$ is {\it orientable} if $L$ is a torus 
and $C$ is {\it nonorientable} if $L$ is a Klein bottle. 
If $M$ is orientable, then each cusp of $M$ is orientable.

\begin{lemma} \label{L:1} 
Let $G$ be the group of isometries of a cusped hyperbolic 3-manifold $M$ of finite volume 
that stabilizes a cusp $C$ of $M$. 
Then $G$ acts effectively on each link $L$ of $C$ as a group of isometries with respect 
to the natural Euclidean metric on $L$. 
\end{lemma}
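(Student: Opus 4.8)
The plan is to work in the universal cover. Realize $M$ as $\mathbb{H}^3/\Pi$ with $\Pi$ a discrete torsion-free group of isometries, so that $\mathrm{Isom}(M) \cong N(\Pi)/\Pi$, where $N(\Pi)$ is the normalizer of $\Pi$ in $\mathrm{Isom}(\mathbb{H}^3)$; this follows from covering space theory, since every isometry of $M$ lifts to an isometry of $\mathbb{H}^3$ normalizing $\Pi$ with ambiguity exactly $\Pi$. Using the upper half-space model, arrange that $C$ corresponds to the parabolic fixed point $\infty$. Then $\Gamma := \Pi_\infty$, the stabilizer of $\infty$ in $\Pi$, is a $2$-dimensional crystallographic group acting on the boundary plane $\mathbb{E}^2 = \partial\mathbb{H}^3 \setminus \{\infty\}$, its translation subgroup $T$ has rank $2$, and each horosphere centered at $\infty$ descends to a link $L$ of $C$, which is isometric to $\mathbb{E}^2/\Gamma$ up to rescaling the metric.

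First I would check that every $g \in G$ is represented by some $\tilde g \in N(\Pi)$ fixing $\infty$: any lift of $g$ permutes the $\Pi$-orbit of $\infty$, which is the full preimage of $C$ in the boundary, and hence can be adjusted by an element of $\Pi$ to fix $\infty$ on the nose. Such a $\tilde g$ then normalizes $\Gamma$, since it conjugates $\Pi_\infty$ to $\Pi_{\tilde g(\infty)} = \Pi_\infty$, and it acts on $\mathbb{E}^2$ as a Euclidean similarity whose linear part is $\lambda R$ for some $\lambda > 0$ and $R \in \mathrm{O}(2)$.

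The one substantive step is to show $\lambda = 1$. The translation subgroup $T$ is characteristic in $\Gamma$, so $\tilde g$ normalizes $T \cong \mathbb{Z}^2$, and conjugation by $\tilde g$ induces an automorphism of $\mathbb{Z}^2$, which has determinant $\pm 1$; but on $T$ this automorphism is implemented by the linear map $\lambda R$, of determinant $\pm \lambda^2$, so $\lambda = 1$. Thus $\tilde g$ acts on $\mathbb{E}^2$ as a genuine Euclidean isometry, it preserves every horosphere centered at $\infty$, and it descends to an isometry of each link $L$. Replacing $\tilde g$ by another representative of $g$ changes this descended map by an element of $\Gamma$, which acts trivially on $L$, so we obtain a well-defined action of $G$ on each $L$ by Euclidean isometries.

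For effectiveness, suppose $g \in G$ acts as the identity on some link $L$. Then a representative $\tilde g$ as above satisfies $\tilde g(x) \in \Gamma x$ for every $x \in \mathbb{E}^2$; by continuity of $\tilde g$ and discreteness of $\Gamma$, the relevant element of $\Gamma$ is independent of $x$, so $\tilde g$ agrees on $\mathbb{E}^2$ with some fixed $\gamma \in \Gamma$, and hence $\tilde g = \gamma$ on all of $\mathbb{H}^3$, an isometry fixing $\infty$ being determined by its boundary values. Since $\gamma \in \Pi$, this means $g$ is the identity in $N(\Pi)/\Pi$, so $G$ acts effectively. The main obstacle is really just the $\lambda = 1$ argument; the rest is bookkeeping with lifts and the covering $\mathbb{H}^3 \to M$.
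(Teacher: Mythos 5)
Your proof is correct, and it follows the same overall skeleton as the paper's (lift the isometry to the normalizer of the covering group, adjust by a deck transformation so the lift fixes the parabolic point $\infty$, show the lift acts on horospheres as a Euclidean isometry, then get effectiveness from the fact that the boundary action determines the action on all of $U^3$ via Poincar\'e extension). The one substantive step is handled genuinely differently, though. The paper shows the lift $\tilde g$ is a Euclidean isometry on $E^2$ by citing Ratcliffe's Theorem 5.5.4 --- an element of a discrete group sharing its fixed point with a parabolic element must itself be elliptic or parabolic, hence cannot scale --- together with Theorem 4.7.4. You instead argue directly: $\tilde g$ normalizes the cusp stabilizer $\Pi_\infty$, hence its rank-$2$ translation lattice $T$, and conjugation by a similarity with linear part $\lambda R$ acts on $T$ by $\lambda R$, which must then be an automorphism of $\mathbb{Z}^2$ of determinant $\pm 1$; since $\det(\lambda R) = \pm\lambda^2$, this forces $\lambda = 1$. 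This determinant argument is self-contained and more elementary (it does not even require knowing that the group generated by $\Pi$ and $\tilde g$ is discrete), whereas the paper's route is shorter on the page because it leans on the cited classification. Your effectiveness argument (a lift of the identity of $L$ is a deck transformation, so $\tilde g$ coincides with some $\gamma \in \Pi$ on the boundary and hence everywhere) is also fine and essentially equivalent to the paper's observation that triviality on $L$ forces triviality on the open set $C \subseteq M$; the ``locally constant by discreteness'' step is slightly terse but valid since $\Pi$ is torsion-free, so $\Pi_\infty$ acts freely on the horosphere.
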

\begin{proof} We shall work in the upper half-space model $U^3$ of hyperbolic 3-space.  
Then there is a discrete subgroup $\Gamma$ of the group $\mathrm{M}(U^3)$ of M\"obius transformations of $\hat E\hbox{}^3$ that leave $U^3$ invariant such that $M = U^3/\Gamma$. 
Let $L$ be a link of the cusp $C$. 
Then there is a point $c$ on the sphere $\hat E\hbox{}^2$ at infinity that is fixed by a parabolic element $f$ 
of $\Gamma$, and there is a maximal open horoball $B$ based at $c$ such that $B$ is precisely invariant 
by the stabilizer $\Gamma_c$ in $\Gamma$ of the point $c$, and 
the inclusion of $B$ into $U^3$ induces a local isometry of $B/\Gamma_c$ onto $C$; moreover, there is a horosphere $\Sigma$ based at $c$ and contained in $B$
such that the inclusion of $B$ into $U^3$ induces an isometry from $\Sigma/\Gamma_c$ to $L$   
where $\Sigma$ has the natural Euclidean metric  (see p.\,139 of \cite{R}) 
and $L$ has the induced Euclidean metric. 
The point $c$ and the horoball $B$ are unique up to the action of $\Gamma$ on $\hat E\hbox{}^2$. 

Let $g$ be an isometry of $M$ that stabilizes the cusp $C$.  
Then $g$ lifts to an element $\tilde g$ of $\mathrm{M}(U^3)$ such that $\tilde g \Gamma \tilde g^{-1} = \Gamma$ by Theorem 13.2.6 of \cite{R}.  
In particular $\tilde g f\tilde g^{-1}$ is a parabolic element of $\Gamma$ that fixes the point $\tilde gc$. 
As $g$ stabilizes the cusp $C$, we have that $\tilde g c = h c$ for some element $h$ of $\Gamma$. 
By replacing $\tilde g$ by $h^{-1}\tilde g$, we may assume that $\tilde g c = c$. 
Now $\tilde g$ is either an elliptic or parabolic transformation by Theorem 5.5.4 of \cite{R}. 
Therefore $\tilde g$ leaves $\Sigma$ invariant and $\tilde g$ 
acts on $\Sigma$ as an isometry by Theorem 4.7.4 of \cite{R}. 
Hence $g$ acts on $L$ as an isometry with respect to the natural Euclidean metric on $L$. 

By conjugating $\Gamma$ in $\mathrm{M}(U^3)$, we may assume that $c = \infty$. 
Then the action of $\tilde g$ on $E^2$, as a Euclidean isometry, determines the action of $\tilde g$ on $U^3$ in a parallel fashion on each horizontal horosphere by Poincar\'e extension (see \S 4.4 of \cite{R}). 
Hence, if $g$ acts trivially on $L$, then $g$ acts trivially on the open subset $C$ of $M$, 
and therefore $g$ is trivial. 
Thus $G$ acts effectively on $L$. 
\end{proof}

\begin{lemma} \label{L:2} 
Let 
$$A = \left(\begin{array}{cc} -1& 0 \\ \phantom{-}0 & 1\end{array}\right), \quad
B = \left(\begin{array}{cc} -1 & 1 \\ \phantom{-}0 & 1 \end{array}\right), \quad 
C= \left(\begin{array}{cc} 0 & 1 \\ 1 & 0\end{array}\right).$$
The group $\mathrm{GL}(2,\integers)$ is the free product of the dihedral group $\langle A, C\rangle$ 
of order $8$ and the dihedral group $\langle B, C\rangle$ of order $12$ amalgamated along 
the dihedral subgroup $\langle -I, C\rangle$ of order $4$.  Every finite subgroup of $\mathrm{GL}(2,\integers)$ is conjugate to a subgroup of $\langle A, C\rangle$ or $\langle B, C\rangle$.
\end{lemma}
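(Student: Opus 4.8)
The plan is to realize $\mathrm{GL}(2,\integers)$ as a discrete group of isometries of the hyperbolic plane $\mathbb{H}^2$ and to read off both assertions from the action of the modular group on its Bass--Serre tree. I would begin with the easy half: $A$, $B$, $C$ generate $\mathrm{GL}(2,\integers)$, since $AB = \left(\begin{smallmatrix} 1 & -1 \\ 0 & 1\end{smallmatrix}\right)$ is an elementary matrix and, together with $A = \diag(-1,1)$ and the swap $C$, recovers all of $\mathrm{GL}(2,\integers)$ by row and column reduction. In particular $\langle A, C\rangle$ and $\langle B, C\rangle$ together generate $\mathrm{GL}(2,\integers)$.

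Next I would fix the geometry. Letting a matrix of determinant $-1$ act on $\mathbb{H}^2$ as an orientation-reversing M\"obius transformation, $A$, $B$, $C$ become the reflections in the geodesics $\mathrm{Re}\,z = 0$, $\mathrm{Re}\,z = \tfrac12$, $|z| = 1$, which bound the hyperbolic triangle $\Delta$ with vertices $\infty$, $i$, $e^{i\pi/3}$ and angles $0$, $\pi/2$, $\pi/3$. The classical fact that $\Delta$ is a fundamental domain for $\mathrm{GL}(2,\integers)$ on $\mathbb{H}^2$ --- equivalently, that $\mathrm{PGL}(2,\integers)$ is the reflection group generated by the sides of $\Delta$, which follows from Poincar\'e's fundamental polygon theorem (see \cite{R}) --- gives, via the dihedral relations at the two finite vertices and no relation at the ideal vertex, the complete presentation $\mathrm{PGL}(2,\integers) = \langle a, b, c \mid a^2, b^2, c^2, (ac)^2, (bc)^3\rangle$ for the images $a, b, c$ of $A, B, C$. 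This is precisely the standard presentation of the amalgamated product $\langle a, c\rangle *_{\langle c\rangle} \langle b, c\rangle$ of the dihedral groups of orders $4$ and $6$ over the cyclic group of order $2$.

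I would then lift this splitting through the central extension $1 \to \{\pm I\} \to \mathrm{GL}(2,\integers) \to \mathrm{PGL}(2,\integers) \to 1$. Since $(AC)^2 = -I = (BC)^3$, the element $-I$ already lies in both $\langle A, C\rangle$ and $\langle B, C\rangle$, so the preimages of $\langle a, c\rangle$, $\langle b, c\rangle$, $\langle c\rangle$ are exactly $\langle A, C\rangle$, $\langle B, C\rangle$, $\langle -I, C\rangle$; a direct matrix computation shows these are dihedral of orders $8$, $12$, $4$ (with $-I$ central) and that $\langle A, C\rangle \cap \langle B, C\rangle = \langle -I, C\rangle$. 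Because $\mathrm{GL}(2,\integers)$ acts, through $\mathrm{PGL}(2,\integers)$, on the Bass--Serre tree $T$ of the above splitting --- with quotient a single edge, with the preimages just named as vertex and edge stabilizers, and without inversions, since an inversion would force the two vertex stabilizers of orders $8$ and $12$ to be conjugate --- the structure theorem for groups acting on trees yields $\mathrm{GL}(2,\integers) = \langle A, C\rangle *_{\langle -I, C\rangle} \langle B, C\rangle$.

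For the last sentence, a finite subgroup $F$ of $\mathrm{GL}(2,\integers)$ acts on $T$ with a finite, hence bounded, orbit, so $F$ fixes a vertex of $T$ by the fixed-point theorem for groups acting on trees; as every vertex stabilizer of $T$ is a conjugate of $\langle A, C\rangle$ or of $\langle B, C\rangle$, the group $F$ is contained in, hence conjugate to a subgroup of, $\langle A, C\rangle$ or $\langle B, C\rangle$. (Alternatively, this can be deduced from the crystallographic restriction by enumerating the finite subgroups of $\mathrm{GL}(2,\integers)$ directly.) I expect the main obstacle to be the geometric input --- confirming that $\mathrm{PGL}(2,\integers)$ is exactly the reflection group of $\Delta$, so that the quoted presentation is complete and $T$ is genuinely a tree --- after which everything else is formal.
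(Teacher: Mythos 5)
Your proposal is correct and follows essentially the same route as the paper: realize $\mathrm{PGL}(2,\integers)$ as the $(2,3,\infty)$ triangle reflection group via Poincar\'e's polygon theorem, read off the amalgam of dihedral groups of orders $4$ and $6$ over $\langle c\rangle$, pull back through $\{\pm I\}$ using the Bass--Serre tree to get $\langle A,C\rangle *_{\langle -I,C\rangle}\langle B,C\rangle$, and conclude the finite-subgroup statement from the tree action (the paper cites the torsion theorem for amalgamated products, which is exactly your fixed-point argument). The only differences are cosmetic: you explicitly verify that $A,B,C$ generate and that the action is without inversions, which the paper leaves implicit.
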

\begin{proof}
The matrices $A, B, C$ have order 2, and $(AC)^2 = -I$, and $(BC)^3=-I$. 
Hence $\langle A, C\rangle$ is a dihedral group of order 8,  
and $\langle B, C\rangle$ is a dihedral group of order 12. 

The group $\mathrm{PGL}(2,\integers)$ acts effectively by isometries on the upper complex half-space model of hyperbolic 2-space $H^2$ by
$$\pm\left(\begin{array}{cc} a & b \\ c & c\end{array}\right)z = \frac{az+b}{cz+d} \quad \hbox{if}\  
\det\left(\begin{array}{cc} a & b \\ c & c\end{array}\right) = 1,$$
and 
$$\pm\left(\begin{array}{cc} a & b \\ c & c\end{array}\right)z = \frac{a\overline{z}+b}{c\overline{z}+d} \quad \hbox{if}\  
\det\left(\begin{array}{cc} a & b \\ c & c\end{array}\right) = -1,$$
It is well know that a fundamental domain for the action of  $\mathrm{PGL}(2,\integers)$ on $H^2$ is the hyperbolic triangle $T$ with vertices $i, \frac{1}{2}+\frac{\sqrt{3}}{2}i, \infty$ and corresponding angles $\pi/2, \pi/3, 0$. 
Moreover $\mathrm{PGL}(2,\integers)$ is a triangle reflection group with respect to the reflections  
$R_1(z) = -\overline{z}, R_2(z) = 1/\overline{z}, R_2(z) = 1-\overline{z}$ in the sides of $T$. 
Note that $R_1$ is the reflection in the $y$-axis, $R_2$ is the inversion in the circle $|z| = 1$, 
and $R_3$ is the reflection in the line $x = \frac{1}{2}$. 
Moreover
$$(\pm A)z = R_1(z), \quad (\pm C)z = R_2(z), \quad (\pm B)z = R_3(z).$$

By Poincar{\'e}'s fundamental polygon theorem (Theorem 13.5.3 \cite{R}), 
the group $\mathrm{PGL}(2,\integers)$ has the presentation 
$$\langle R_1, R_2, R_3; R_1^2, R_2^2, R_3^2, (R_1R_2)^2, (R_2R_3)^3\rangle,$$
which is equivalent to the presentation
$$\langle R_1, R_2, R_3, R_4; R_1^2, R_2^2, (R_1R_2)^2, R_2 = R_4, R_3^2, R_4^2, (R_4R_3)^3\rangle.$$
Therefore $\mathrm{PGL}(2,\integers)$ is the free product with amalgamation 
of the dihedral group $\langle R_1, R_2\rangle$ of order 4 and the dihedral group 
$\langle R_2, R_3\rangle$ of order 6 amalgamated along the subgroup $\langle R_2\rangle$ of order 2.

Let $\eta: \mathrm{GL}(2,\integers) \to \mathrm{PGL}(2,\integers)$ the quotient map.  
Then $\mathrm{GL}(2,\integers)$ acts on the Bass-Serre tree of the amalgamated product decomposition of $\mathrm{PGL}(2,\integers)$ via $\eta$. 
Hence $\mathrm{GL}(2,\integers)$ is the amalgamated product of the groups
$$\eta^{-1}(\langle R_1, R_2\rangle) = \langle A, C\rangle\ \ 
\hbox{and}\ \  \eta^{-1}(\langle R_2, R_3\rangle) = \langle B, C\rangle$$
along the subgroup $\eta^{-1}(\langle R_2\rangle) = \langle -I, C\rangle$. 
By the torsion theorem for amalgamated products of groups, every finite subgroup of 
$\mathrm{GL}(2,\integers)$ is conjugate to a subgroup of $\langle A, C\rangle$ or $\langle B, C\rangle$
\end{proof}

\section{On $k$-Transitive Manifolds with $k \geq 5$} 

Our basic reference for the theory of finite permutation groups is Wielandt's classic book \cite{W}. In particular, we refer to \cite{W} for all basic definitions in the elementary theory of permutation groups.  

\begin{lemma}\label{L:3} 
If $G$ is a 4-transitive permutation group of degree $n$ 
with a nontrivial solvable normal subgroup, then $n = 4$ and $G = S_4$.
\end{lemma}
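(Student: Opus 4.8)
The plan is to reduce to the affine case and then rule out every degree $n \geq 5$ by an elementary linear-algebra argument.

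Since a $4$-transitive group is $2$-transitive, hence primitive, every nontrivial normal subgroup of $G$ is transitive. Starting from the given nontrivial solvable normal subgroup $N$, I would choose (using finiteness) a minimal normal subgroup $M$ of $G$ with $M \leq N$. Then $M$ is characteristically simple and, being contained in $N$, solvable, so $M$ is elementary abelian, say $M \cong (\integers/p)^d$; and $M$ is transitive by primitivity. A transitive abelian group is regular, so $M$ is regular and $n = p^d$. Identifying the permutation domain with $V = \mathbb{F}_p^d$ so that $M$ is the translation group, we obtain $G = M \rtimes G_0 \leq \mathrm{AGL}(d,p)$, where the stabilizer $G_0$ of $0$ is a subgroup of $\mathrm{GL}(d,p)$. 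Because $G$ is $4$-transitive and $n = |V| \geq 4$, the group $G_0$ is $3$-transitive on $V \setminus \{0\}$. (This reduction is the standard passage to a $2$-transitive group of affine type; see \cite{W}.)

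Now suppose for contradiction that $n \geq 5$, and split on the parity of $p$. If $p$ is odd, then for any nonzero $v \in V$ the vector $2v$ lies in $V \setminus \{0, v\}$ and is fixed by every linear map fixing $v$, hence by every element of $(G_0)_v$; but $2$-transitivity of $G_0$ on $V \setminus \{0\}$ forces $(G_0)_v$ to be transitive on the $n - 2 \geq 3$ points of $V \setminus \{0, v\}$, a contradiction. If $p = 2$, then $n = 2^d \geq 5$ forces $d \geq 3$; picking linearly independent $v, w \in V$, the vector $v + w$ lies in $V \setminus \{0, v, w\}$ and is fixed by every element of $(G_0)_{v,w}$, while $3$-transitivity of $G_0$ forces $(G_0)_{v,w}$ to be transitive on the $n - 3 \geq 5$ points of $V \setminus \{0, v, w\}$, again a contradiction. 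It is precisely here that the full strength of $4$-transitivity is used: for $p = 2$, $d = 3$ the group $\mathrm{AGL}(3,2)$ is $3$-transitive of degree $8$ and has a regular (hence solvable) normal subgroup, so the hypothesis cannot be weakened to $3$-transitivity.

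Hence $n = 4$, and a $4$-transitive group of degree $4$ has order at least $4! = 24$, so $G = S_4$. The only step requiring real care is the reduction in the second paragraph — confirming that a solvable normal subgroup produces a regular elementary abelian one and therefore an affine structure on the domain — together with the bookkeeping of exactly which transitivity is invoked in each parity case; the two linear-algebra contradictions themselves are immediate once the affine picture is in place.
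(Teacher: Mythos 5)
Your proof is correct, and it follows the paper's argument at the structural level while replacing its main citation with a self-contained argument. Like the paper, you first use primitivity (from $2$-transitivity) to show that a minimal solvable normal subgroup is regular and elementary abelian, which pins down $n = p^d$; the paper gets this from Theorem 11.5 of Wielandt, and you rederive it from the standard facts that a minimal normal subgroup is characteristically simple and that a transitive abelian group is regular. Where the paper then simply invokes Wielandt's Theorem 11.3(d) to conclude $n = 4$, you prove that statement directly: after identifying the domain with $V = \mathbb{F}_p^d$ and the point stabilizer with a subgroup of $\mathrm{GL}(d,p)$, you observe that $2v$ (for $p$ odd) or $v+w$ (for $p = 2$, $d \geq 3$) is a forced fixed point of a one- or two-point stabilizer in $G_0$, contradicting the $2$- or $3$-transitivity of $G_0$ on $V \setminus \{0\}$ that $4$-transitivity of $G$ guarantees. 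This is essentially the classical proof of Wielandt's theorem, so your write-up buys self-containedness and makes visible exactly where the hypothesis of $4$-transitivity (as opposed to $3$-transitivity, where $\mathrm{AGL}(3,2)$ survives) is consumed, at the cost of a longer argument than the paper's five-line citation chain. The final step ($n = 4$ and $|G| \geq 4! = 24$ force $G = S_4$) matches the paper's conclusion.
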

\begin{proof} The group $G$ is primitive by Theorem 9.6 of \cite{W}. 
Let $N$ be a minimal, nontrivial, solvable, normal subgroup of $G$. 
Then $N$ is regular by Theorem 11.5 of \cite{W}. 
Hence $n = 4$ by Theorem 11.3(d) of \cite{W}. 
Therefore $G = S_4$. 
\end{proof}

\begin{theorem} \label{T:2} 
Let $M$ be a cusped hyperbolic 3-manifold $M$ of finite volume such that 
the induced action of the group of isometries of $M$ 
on the set of cusps of $M$ is $k$-transitive with $k \geq 5$.  
Then $k = 5$, and the possible number of cusps of $M$ is $5$; moreover, each cusp of $M$ is orientable.  

\end{theorem}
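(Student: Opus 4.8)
The plan is to combine the classification of multiply transitive finite permutation groups with the geometric constraints coming from the stabilizer of a cusp acting on its link. Let $G$ denote the group of isometries of $M$ and let $n$ be the number of cusps, so $G$ acts $k$-transitively on a set of $n$ elements with $k \geq 5$. The key geometric input is Lemma~\ref{L:1}: the stabilizer $G_C$ of a cusp $C$ acts effectively on a link $L$ of $C$ as a group of Euclidean isometries, and $L$ is a flat torus or flat Klein bottle. The isometry group of such a surface is a compact Lie group, and its group of components maps, via the action on $H_1(L)$, into $\mathrm{GL}(2,\integers)$; the kernel of that map is a connected group of translations, which acts freely. Hence any \emph{finite} subgroup of $\mathrm{Isom}(L)$ that fixes a point of $L$ injects into $\mathrm{GL}(2,\integers)$, and by Lemma~\ref{L:2} is conjugate into $\langle A,C\rangle$ (order $8$) or $\langle B,C\rangle$ (order $12$). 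The deeper point is that $G_C$ is finite: this follows because a finite-volume hyperbolic $3$-manifold has a finite isometry group (its isometry group is discrete and compact, e.g.\ by Mostow rigidity together with the fact that $M$ has finite volume), so in fact $G$ itself is finite and we are squarely in the world of finite permutation groups.

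Next I would run the permutation-group argument. Since $k \geq 5$, $G$ is in particular $4$-transitive. A $4$-transitive group with a nontrivial solvable normal subgroup is $S_4$ on $4$ points by Lemma~\ref{L:3}, but $S_4$ is not $5$-transitive, so $G$ has no nontrivial solvable normal subgroup; in particular $G$ is not solvable and $G$ acts faithfully and primitively. By the classification of $5$-transitive finite permutation groups (a consequence of the classification of finite simple groups, via the Jordan–Cole–Miller results: the only $k$-transitive groups for $k\geq 6$ are $S_n$ and $A_n$, and the only $5$-transitive groups that are not $S_n$ or $A_n$ are the Mathieu groups $M_{12}$ on $12$ points and $M_{24}$ on $24$ points), the candidates for $G$ are: $S_n$ with $n \geq k$, $A_n$ with $n\geq k+2$, and, if $k = 5$, the Mathieu groups $M_{12}$ and $M_{24}$. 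The task is to eliminate all of these except the cases giving $k=5$ and $n=5$, i.e.\ $G = S_5$.

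The elimination is where the geometry does the work, via an orbit-counting bound on $n$. If $G$ acts $k$-transitively on $n$ cusps, then the point stabilizer $G_C$ acts $(k-1)$-transitively on the remaining $n-1$ cusps, so $|G_C| \geq (n-1)(n-2)\cdots(n-k+1)$. On the other hand $G_C$ is a finite group of isometries of the link $L$ fixing a point (the cusp gives a canonical basepoint is not quite right — instead one uses that $G_C$ is finite and meets the translation subgroup trivially, or passes to the subgroup fixing a point and bounds the index), so $|G_C|$ divides $24$ after accounting for the translation part; more precisely the image of $G_C$ in $\mathrm{GL}(2,\integers)$ has order dividing $8$ or $12$, hence at most $12$, while the translation part is a finite abelian group on which $G_C$ acts, and a careful bookkeeping (splitting $G_C$ via its action on $\pi_1(L)$) shows $|G_C| \leq 12 \cdot (\text{something small})$; in any case $|G_C|$ is bounded by a constant independent of $n$. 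Combining with $|G_C| \geq (n-1)\cdots(n-4)$ forces $n$ small, ruling out $M_{12}$, $M_{24}$, and all large alternating and symmetric cases, and a short case check on the remaining small $n$ (using that $S_5$ on $5$ points does arise and that $6$-transitivity would force $k\geq 6$, impossible) pins down $k=5$ and $n=5$. Finally, to see each cusp is orientable: a nonorientable cusp has a Klein bottle link, whose orientation-preserving mapping class data forces the finite subgroup of $\mathrm{GL}(2,\integers)$ realized by $G_C$ to lie in the smaller group; checking that $S_4 = G_C$ (the stabilizer in $S_5$) cannot embed compatibly with a Klein-bottle link — since $|S_4| = 24 > 12 \geq 8$ and $S_4$ is not a subgroup of either $\langle A,C\rangle$ or $\langle B,C\rangle$ unless the translation subgroup is nontrivial, and for a Klein bottle the available translation subgroup is too small — rules out nonorientable cusps.

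\medskip

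\noindent\textbf{Main obstacle.} The hard part will be the bookkeeping in the bound on $|G_C|$: one must correctly combine the linear part in $\mathrm{GL}(2,\integers)$ (controlled by Lemma~\ref{L:2}) with the translation part of $\mathrm{Isom}(L)$, ensuring the bound on $|G_C|$ is genuinely independent of $n$, and then handle the finitely many surviving small values of $n$ without error — including the delicate point that $G_C = S_4$ must act effectively on a \emph{torus} link, which constrains the lattice and shows why $n=5$ (so $G=S_5$, $G_C=S_4$) is consistent while nonorientable cusps are not.
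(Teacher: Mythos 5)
There is a genuine gap at the heart of your elimination step, namely the claim that $|G_C|$ is bounded by a constant independent of $n$. This is false. The kernel of $\mathrm{Aff}(L)\to\mathrm{GL}(2,\mathbb{Z})$ is the translation torus $\mathbb{R}^2/\mathbb{Z}^2$, which contains finite subgroups $(\mathbb{Z}/N\mathbb{Z})^2$ of arbitrarily large order; what is controlled is only the \emph{rank} of $G_C\cap\ker\pi$ (at most $2$, which is the content of the paper's Lemma~\ref{L:6}), not its order. Moreover the translation part can act nontrivially on the set of cusps: Vogeler's $2$-transitive examples have arbitrarily many cusps, so their cusp stabilizers, which must act transitively on the remaining $n-1$ cusps, have unbounded order. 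Your parenthetical hedge ("$G_C$ meets the translation subgroup trivially, or passes to the subgroup fixing a point") is precisely the step you cannot establish --- $G_C$ need not fix a point of $L$ --- and your own "main obstacle" paragraph concedes the bookkeeping is unresolved. Without a uniform bound on $|G_C|$, the comparison with $(n-1)(n-2)\cdots(n-k+1)$ never gets off the ground, and the CFSG-based list of $5$-transitive groups (which is in any case much heavier machinery than needed) does no work.

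What the geometry actually gives you, and what the paper uses instead, is that $G_C$ is \emph{solvable}: it is an extension of an abelian group (its intersection with the translation kernel) by a finite subgroup of $\mathrm{GL}(2,\mathbb{Z})$, and the latter is solvable by Lemma~\ref{L:2}. Hence the induced permutation group $G_1^\Omega$ is a solvable group acting $(k-1)\geq 4$-transitively on the $n-1$ remaining cusps, and the classical fact recorded in Lemma~\ref{L:3} (a $4$-transitive permutation group with a nontrivial solvable normal subgroup is $S_4$ in degree $4$; this rests on Wielandt's Theorems 9.6, 11.3, 11.5, not on CFSG) forces $n-1=4$, hence $n=5$ and $k=5$. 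Your orientability argument is likewise off target: for a Klein bottle the translation kernel $\mathbb{R}/\mathbb{Z}$ is not "too small" as a group, since it too has arbitrarily large finite subgroups. The actual obstruction is that the quotient $\mathrm{Aff}(L)/\ker\pi\cong(\mathbb{Z}/2\mathbb{Z})^2$ has order $4$, so $G_1$ modulo its abelian normal subgroup $K\cap G_1$ has order at most $4$, whereas $S_4$ modulo any abelian normal subgroup has order at least $6$ --- a contradiction that rules out Klein bottle links.
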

\begin{proof}
Let $G = {\rm Isom}(M)$.  
It is well known that $G$ is finite (see Theorem 12.7.7 of \cite{R}). 
Let $\Omega$ be the set of cusps of $M$, and let $G^\Omega$ 
be the induced permutation group of $\Omega$. 
Let $n = |\Omega|$.  As $5 \leq k \leq n$, we have that $n \geq 5$. 
As $G^\Omega$ acts transitively on $\Omega$, the degree of $G^\Omega$ is $n$. 
Fix a cusp $C$ of $M$, and let $G_1$ be the subgroup of $G$ that stabilizes $C$,  
and let $G_1^\Omega$ be the corresponding point stabilizer of $G^\Omega$. 

Let $L$ be a link of $C$. 
By Lemma 1, we may regard $G_1$ to be a subgroup of the group $\mathrm{Aff}(L)$ 
of affine transformations of $L$.  
Assume first that $L$ is a torus. 
By Theorems 1 and 3 of \cite{R-T-I}, 
there is a short exact sequence 
$$1\to \realnos^2/\integers^2 \to \mathrm{Aff}(L)\ {\buildrel \pi \over \longrightarrow}\  \mathrm{GL}(2,\integers)\to 1.$$
Then $\pi(G_1)$ is a finite subgroup of $\mathrm{GL}(2,\integers)$. 
Therefore $\pi(G_1)$ is solvable by Lemma 2. 
Hence $G_1$ is solvable, since ${\rm ker}(\pi)$ is abelian. 

The group $G_1$ maps onto $G_1^\Omega$, 
and so $G_1^\Omega$ is solvable. 
Now $G_1^\Omega$ is $(k-1)$-transitive on $\Omega-\{C\}$ by Theorem 9.1 of \cite{W}. 
Hence, the degree of $G_1^\Omega$ is $n-1$. 
As $G_1^\Omega$ is 4 transitive, $G_1^\Omega$ has degree 4 and $G_1^\Omega = S_4$ by Lemma 3. 
Hence $n -1 = 4$, and so $n = 5$. 
Therefore $M$ has five cusps.  Moreover $k = 5$, since $5 \leq k \leq n$. 
For an example of such a manifold, see Example 1 in \S 6.

Now assume that $L$ is a Klein bottle. 
By Theorems 1 and 3 of \cite{R-T-I} and Exercise 9.5.8 of \cite{R}, 
there is a short exact sequence 
$$1\to \realnos/\integers \to \mathrm{Aff}(L)\ {\buildrel \pi \over \longrightarrow}\  (\integers/2\integers)^2 \to 1.$$
Clearly, $G_1$ is solvable. 
The same argument as in the case that $L$ is a torus implies that $G_1^\Omega = S_4$. 

Let $K$ be the kernel of the projection from $\mathrm{Aff}(L)$ to $(\integers/2\integers)^2$. 
Then $K\cap G_1$ is an abelian normal subgroup of $G_1$ that projects onto a normal subgroup $N$ of $G_1^\Omega$ of order 1 or 4, since $S_4$ has a unique abelian proper normal subgroup, and this subgroup has order 4. Therefore $G_1/(K\cap G_1) \cong (G_1K)/K$ has order at most 4 and 
projects onto the group $G_1^\Omega/N$ of order at least 6, which is a contradiction. 
Thus each cusp of $M$ is orientable. 
\end{proof}

\section{On 4-Transitive Manifolds that are not 5-Transitive} 

\begin{lemma} \label{L:4} 
Let $G$ be a 2-transitive permutation group of degree $n$ with 
a nontrivial solvable normal subgroup. 
Then 
\begin{enumerate}
\item The group $G$ has a nontrivial, regular, abelian, normal subgroup $N$. 
\item The group $N$ is an elementary abelian $p$-group  
for some prime number $p$. 
\item The order of $N$ is $n$, and so $n = p^m$ for some positive integer $m$. 
\item The group $N$ is the unique minimal nontrivial normal subgroup of $G$. 
\item The group $N$ is the unique nontrivial abelian normal subgroup of $G$. 
\end{enumerate}
\end{lemma}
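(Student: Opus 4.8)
The plan is to extract all five assertions from the primitivity of $G$, using standard facts about minimal normal subgroups of finite groups together with the behaviour of normal subgroups and centralizers inside primitive permutation groups; $2$-transitivity enters only through Theorem 9.6 of \cite{W}, which guarantees that $G$ is primitive, and note that $G$, having degree $n$, embeds in $S_n$ and so is finite. I would choose $N$ to be a minimal nontrivial normal subgroup of $G$ contained in the given nontrivial solvable normal subgroup $S$; such an $N$ exists by descending through a finite chain of normal subgroups of $G$ lying in $S$, and $N$ is in fact a minimal normal subgroup of $G$, since a nontrivial normal subgroup of $G$ properly contained in $N$ would again lie in $S$. Being a minimal normal subgroup of a finite group, $N$ is characteristically simple, and being contained in $S$ it is solvable; a finite solvable characteristically simple group is an elementary abelian $p$-group (its derived subgroup is characteristic and proper, hence trivial, so $N$ is abelian, and then its Sylow subgroups and its subgroup of $p$-th powers are characteristic, forcing $N$ to be elementary abelian). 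This proves (2) and the abelian part of (1). Since $G$ is primitive and $N$ is a minimal nontrivial solvable normal subgroup, $N$ is regular by Theorem 11.5 of \cite{W}, completing (1); regularity then gives $|N| = n$, so $n = p^m$, which is (3).

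For the uniqueness statements I would first record that the centralizer $C$ of $N$ in $S_n$ equals $N$: since $N$ is transitive, $C$ is semiregular, so $|C| \leq n$, while $N \leq C$ because $N$ is abelian, and comparison with $|N| = n$ gives $C = N$. If $M$ is a minimal nontrivial normal subgroup of $G$ different from $N$, then $M \cap N$ is a normal subgroup of $G$ properly contained in each of $M$ and $N$, hence trivial by minimality, so $[M,N] \leq M \cap N = 1$ (both $M$ and $N$ being normal in $G$), and therefore $M \leq C_G(N) \leq C = N$, forcing the contradiction $M = N$; thus $N$ is the unique minimal nontrivial normal subgroup of $G$, which is (4). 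For (5), an arbitrary nontrivial abelian normal subgroup $A$ of $G$ contains a minimal nontrivial normal subgroup of $G$, which by (4) is $N$; then $N \leq A$ and $A$ abelian give $A \leq C_G(N) \leq C = N$, so $A = N$.

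The only delicate point I anticipate is the identity $C_{S_n}(N) = N$, i.e. the classical fact that the centralizer of a transitive subgroup of $S_n$ is semiregular; this is standard and available in \cite{W}, but the entire argument rests on the regularity of $N$ furnished by Theorem 11.5 of \cite{W}, so that step deserves care. Everything else is a routine assembly of $2$-transitive $\Rightarrow$ primitive, the structure theory of minimal normal subgroups of finite groups, and the transitivity of nontrivial normal subgroups of primitive groups.
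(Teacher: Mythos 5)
Your proof is correct and follows the paper's skeleton for parts (1)--(3): two-transitivity gives primitivity via Theorem 9.6 of \cite{W}, a minimal nontrivial normal subgroup $N$ of $G$ inside the given solvable normal subgroup is characteristically simple and solvable, hence elementary abelian, and Theorem 11.5 of \cite{W} gives regularity, whence $|N| = n$. (The paper simply cites Theorem 11.5 for the elementary abelian statement as well; your characteristic-subgroup argument is a correct self-contained substitute.) Where you genuinely diverge is in (4) and (5): you deduce both from the identity $C_{S_n}(N) = N$, obtained from semiregularity of the centralizer of a transitive group together with $|N| = n$, and then use $[M,N] \leq M \cap N = 1$ for a putative second minimal normal subgroup $M$, and $A \leq C_G(N)$ for a nontrivial abelian normal subgroup $A \geq N$. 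The paper instead quotes Theorem 11.5 of \cite{W} directly for the uniqueness of the minimal normal subgroup, and for (5) argues that $A$ is transitive (Theorem 8.8 of \cite{W}), hence regular because abelian (Proposition 4.4), hence of order $n$; since $A \supseteq N$ and $|A| = |N| = n$, it concludes $A = N$. Both routes are standard and correct; yours trades the order-counting for a centralizer computation and derives (4) without re-invoking Theorem 11.5, while the paper's version is shorter on the page because it leans on Wielandt's packaged results.
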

\begin{proof}
The group $G$ is primitive by Theorem 9.6 of \cite{W}. 
Let $N$ be a minimal nontrivial solvable normal subgroup of $G$. 
Then $N$ is regular, $N$ is an elementary abelian group, 
and the only nontrivial minimal normal subgroup of $G$ 
by Theorem 11.5 of \cite{W}. 
Hence $n = |N|$ by Proposition 4.2 of \cite{W}. 

Suppose $A$ is a nontrivial abelian normal subgroup of $G$. 
The group $A$ is transitive by Theorem 8.8 of \cite{W}. 
The group $A$ is regular by Proposition 4.4 of \cite{W},  
and so $n = |A|$ by Proposition 4.2 of \cite{W}. 
As the minimal nontrivial normal subgroup of $G$ contained in $A$ is $N$, 
we have that $A = N$. 
\end{proof}

\begin{lemma} \label{L:5} 
Let $G$ be a 3-transitive permutation group of degree $n$ with 
a nontrivial solvable normal subgroup. 
Then either $n = 3$ and $G = S_3$ or $n = 2^m$ for some integer $m \geq 2$.  
\end{lemma}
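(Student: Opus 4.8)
The plan is to exploit Lemma 4, which applies because a 3-transitive group is in particular 2-transitive. By parts (1)--(3) of Lemma 4, $G$ has a regular, normal, elementary abelian $p$-subgroup $N$ with $|N| = n = p^m$. The first step is the standard identification of the permutation domain with $N \cong \mathbb{F}_p^{\,m}$: after fixing a point $\omega_0$ and identifying it with $0 \in N$, the regular subgroup $N$ acts by translations and the point stabilizer $G_{\omega_0}$ acts by conjugation, hence linearly, as a subgroup of $\mathrm{Aut}(N) = \mathrm{GL}(m,p)$. Under this identification $\Omega \setminus \{\omega_0\}$ becomes $\mathbb{F}_p^{\,m} \setminus \{0\}$, and since $G$ is 3-transitive, $G_{\omega_0}$ is 2-transitive on $\mathbb{F}_p^{\,m} \setminus \{0\}$; in particular, for each nonzero $v$ the stabilizer $(G_{\omega_0})_v$ acts transitively on $\mathbb{F}_p^{\,m} \setminus \{0,v\}$.

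Next I would split on $m$. Suppose $m \geq 2$ and $p$ is odd. Then $2v$ is a nonzero vector different from $v$, and every element of $(G_{\omega_0})_v$, being linear and fixing $v$, also fixes $2v$; but $\mathbb{F}_p^{\,m} \setminus \{0,v\}$ has more than one element, so $(G_{\omega_0})_v$ cannot be transitive on it --- a contradiction. Hence whenever $m \geq 2$ we must have $p = 2$, and then $n = 2^m$ with $m \geq 2$ (which is automatic once $p = 2$, since $n \geq 3$ for a 3-transitive action).

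It remains to handle $m = 1$, where $N = \mathbb{F}_p$ and $G_{\omega_0} \leq \mathrm{GL}(1,p) \cong \integers/(p-1)\integers$. Here a cardinality count finishes the argument: a 2-transitive group on $p - 1$ points has order at least $(p-1)(p-2)$, while $|G_{\omega_0}| \leq p - 1$, forcing $p - 2 \leq 1$ as soon as $p \geq 3$, i.e. $p = 3$ (the value $p = 2$ is excluded since $n = 2 < 3$ admits no 3-transitive action). Then $n = 3$ and $|G| = |N|\,|G_{\omega_0}| = 3 \cdot 2 = 6$, so the faithful 3-transitive group $G$ must be all of $S_3$. Assembling the cases gives the claim. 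I expect the only delicate point to be justifying the identification that makes the $G_{\omega_0}$-action \emph{linear} --- this is the crux on which the contradiction for odd $p$ rests --- while the remaining case analysis and counting are routine.
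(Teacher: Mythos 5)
Your proof is correct and follows the same route as the paper: both arguments rest on Lemma 4 producing a regular, normal, elementary abelian $p$-subgroup $N$ of order $n = p^m$, after which the paper simply cites Theorem 11.3(b) of Wielandt for the dichotomy while you prove that citation directly (the two-point stabilizer, acting linearly on $N \cong \mathbb{F}_p^{\,m}$, must fix $2v$ together with $v$, which rules out odd $p$ unless $n = 3$). The identification making the stabilizer action linear, which you rightly flag as the crux, is exactly Wielandt's Theorem 11.2 --- invoked elsewhere in the paper --- and your faithfulness and counting steps are sound, so there is no gap.
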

\begin{proof}
This follows from Lemma 4(1), Theorem 11.3(b) of \cite{W}, and $3\leq n$. 
\end{proof}

\begin{lemma} \label{L:6} 
Let $H$ be a finite subgroup of  $\realnos^n/\integers^n$ for some positive integer $n$.  
Then the rank (minimal number of generators) of $H$ is at most $n$. 
\end{lemma}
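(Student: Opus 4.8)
The plan is to reduce the statement to the elementary fact that a subgroup of the free abelian group $\integers^n$ is generated by at most $n$ elements. First I would use finiteness of $H$ to place it inside a finitely generated piece of the torus: since $H$ is finite there is a positive integer $N$ (for instance the exponent of $H$) with $Nh = 0$ for every $h$ in $H$, so $H$ is contained in the $N$-torsion subgroup of $\realnos^n/\integers^n$, which is exactly $\frac{1}{N}\integers^n/\integers^n$. Because the rank (minimal number of generators) of a finite abelian group is an isomorphism invariant, it suffices to bound the rank of $H$ regarded as a subgroup of this last group.

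Next I would transport the problem to $\integers^n$. Multiplication by $N$ induces an isomorphism $\frac{1}{N}\integers^n/\integers^n \to \integers^n/N\integers^n$; let $\overline{H}$ denote the image of $H$ under it, a subgroup of $\integers^n/N\integers^n$, and let $q\colon \integers^n \to \integers^n/N\integers^n$ be the quotient homomorphism. Set $\widetilde{H} = q^{-1}(\overline{H})$. Then $\widetilde{H}$ is a subgroup of $\integers^n$, hence free abelian of rank at most $n$, so it is generated by elements $x_1,\dots,x_m$ with $m \leq n$. Their images $q(x_1),\dots,q(x_m)$ generate $q(\widetilde{H}) = \overline{H}$, and therefore $\overline{H}$, and with it $H$, has rank at most $n$.

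I do not expect a real obstacle here; the argument is short, and the only substantive input is the standard fact that a subgroup of $\integers^n$ is free abelian of rank at most $n$ (the structure theorem for finitely generated modules over the principal ideal domain $\integers$). If one prefers to avoid even that, an alternative is to argue one prime at a time: for each prime $p$ the $p$-torsion subgroup of $\realnos^n/\integers^n$ is $\frac{1}{p}\integers^n/\integers^n \cong (\integers/p\integers)^n$, so $\dim_{\integers/p\integers} H[p] \leq n$ for every $p$, and the rank of the finite abelian group $H$ equals $\max_p \dim_{\integers/p\integers} H[p]$, which is therefore at most $n$.
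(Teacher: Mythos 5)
Your proof is correct and follows essentially the same route as the paper: both lift $H$ along the quotient $\realnos^n \to \realnos^n/\integers^n$ to a rank-$\leq n$ lattice and push generators back down. The only cosmetic difference is that the paper identifies the preimage $\eta^{-1}(H)$ as a discrete subgroup of $\realnos^n$ (hence generated by at most $n$ linearly independent vectors), whereas you locate $H$ inside $\frac{1}{N}\integers^n/\integers^n$ using its exponent and invoke the structure theorem for subgroups of $\integers^n$.
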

\begin{proof}
Let $\eta: \realnos^n \to \realnos^n/\integers^n$ be the quotient map. 
Then $\integers^n$ is a finite index subgroup of $\eta^{-1}(H)$. 
Hence $\eta^{-1}(H)$ is a discrete subgroup of $\realnos^n$. 
Therefore $\eta^{-1}(H)$ is generated by a set of linearly independent vectors 
$\{v_1,\ldots, v_m\}$ with $m \leq n$ by Theorem 5.3.2 of \cite{R}. 
Therefore $H$ is generated by $\{\eta(v_1), \ldots, \eta(v_m)\}$. 
\end{proof}

\begin{theorem}\label{T:3} 
Let $M$ be a cusped hyperbolic 3-manifold of finite volume such that 
the induced action of the group of isometries of $M$ 
on the set of its cusps is $4$-transitive but not $5$-transitive. 
Then the possible number of cusps of $M$ is $4$. 
\end{theorem}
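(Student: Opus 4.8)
The plan is to run the argument from the proof of Theorem~\ref{T:2}, now pushing it through for $k = 4$ by using the full strength of Lemmas~\ref{L:4}, \ref{L:5}, and~\ref{L:6}. Set $G = \mathrm{Isom}(M)$, which is finite; let $\Omega$ be the set of cusps of $M$, put $n = |\Omega|$, and let $G^\Omega$ be the induced permutation group of $\Omega$, so that $n \geq 4$ and $G^\Omega$ is a $4$-transitive but not $5$-transitive group of degree $n$. Fix a cusp $C$, let $G_1$ be its stabilizer in $G$, and let $G_1^\Omega$ be the corresponding point stabilizer of $G^\Omega$; by Theorem 9.1 of \cite{W} this is $3$-transitive of degree $n - 1$. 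Let $L$ be a link of $C$; by Lemma~\ref{L:1} we may regard $G_1$ as a subgroup of $\mathrm{Aff}(L)$, and $L$ is either a torus or a Klein bottle. In both cases the relevant short exact sequence from \cite{R-T-I} recalled in the proof of Theorem~\ref{T:2} --- with kernel $\realnos^2/\integers^2$ and quotient $\mathrm{GL}(2,\integers)$ when $L$ is a torus, and with kernel $\realnos/\integers$ and quotient $(\integers/2\integers)^2$ when $L$ is a Klein bottle --- shows, together with Lemma~\ref{L:2}, that $G_1$ is solvable: the image $\pi(G_1)$ is a finite subgroup of $\mathrm{GL}(2,\integers)$, hence solvable of order at most $12$, in the torus case, and a subgroup of $(\integers/2\integers)^2$ in the Klein bottle case, while $\ker\pi$ is abelian in both. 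Being a quotient of $G_1$, the group $G_1^\Omega$ is then solvable as well.

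Next I would apply Lemma~\ref{L:5} to the solvable $3$-transitive group $G_1^\Omega$ of degree $n - 1$: either $n - 1 = 3$, so that $n = 4$ and we are done, or $n - 1 = 2^m$ for some integer $m \geq 2$, which must be ruled out. Assume the latter. Then $G_1^\Omega$ is a solvable $2$-transitive group of degree $2^m$, so Lemma~\ref{L:4} provides a normal subgroup $N$ of $G_1^\Omega$ that is elementary abelian of order $2^m$ and is the unique nontrivial abelian normal subgroup of $G_1^\Omega$. Let $K = G_1 \cap \ker\pi$, an abelian normal subgroup of $G_1$, and let $\overline{K}$ be its image in $G_1^\Omega$, an abelian normal subgroup of $G_1^\Omega$. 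If $\overline{K} = 1$, then $G_1^\Omega$ is a quotient of $\pi(G_1)$, so $|G_1^\Omega| \leq 12$, which is impossible since a $3$-transitive group of degree $n - 1 \geq 4$ has order at least $(n-1)(n-2)(n-3) \geq 24$. Hence $\overline{K} \neq 1$, and so $\overline{K} = N$ by Lemma~\ref{L:4}.

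It then remains to bound $m$. When $L$ is a torus, $K$ is a finite subgroup of $\ker\pi \cong \realnos^2/\integers^2$, so $K$ has rank at most $2$ by Lemma~\ref{L:6}; when $L$ is a Klein bottle, $K$ is a finite subgroup of $\ker\pi \cong \realnos/\integers$, hence cyclic, of rank at most $1$. Since $\overline{K} = N \cong (\integers/2\integers)^m$ is a quotient of $K$, the integer $m$, which is the rank of $N$, is at most the rank of $K$. In the Klein bottle case this forces $m \leq 1$, contradicting $m \geq 2$; so $L$ is a torus and $m = 2$, whence $n - 1 = 4$ and $n = 5$. But then $G^\Omega$ is a $4$-transitive subgroup of $S_5$, so its order is divisible by $5! = 120$ and therefore $G^\Omega = S_5$, which is $5$-transitive --- contradicting the hypothesis. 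Thus $n - 1 = 2^m$ cannot occur, and we conclude $n = 4$. I expect the genuinely delicate point to be the identification $\overline{K} = N$: it is precisely the uniqueness assertion in Lemma~\ref{L:4}, that the minimal normal subgroup $N$ is the only nontrivial abelian normal subgroup, which converts the soft estimate ``the rank of $K$ is at most $2$'' into the sharp constraint $m \leq 2$ and thereby kills every case $n - 1 = 2^m$ in one stroke.
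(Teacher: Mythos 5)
Your proposal is correct and follows essentially the same route as the paper: reduce to $G_1^\Omega$ solvable and $3$-transitive, apply Lemma~\ref{L:5} to split into the degree-$3$ case and the degree-$2^m$ case, and kill the latter by showing the image of $G_1\cap\ker\pi$ must be the unique abelian normal subgroup $N$ of Lemma~\ref{L:4}, whose rank is capped at $2$ by Lemma~\ref{L:6}, forcing degree $4$ and hence $5$-transitivity of $G^\Omega$, a contradiction. The only cosmetic differences are that you derive the final contradiction from $G^\Omega=S_5$ rather than $G_1^\Omega=S_4$, and you handle the Klein bottle kernel (rank at most $1$) separately, whereas the paper subsumes both cases under the bound of rank at most $2$.
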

\begin{proof}
We continue with the notation in the proof of Theorem 2.   
Then as in the proof of Theorem 1, we have that $G_1^\Omega$ is solvable 
and $G_1^\Omega$ is 3-transitive on $\Omega-\{C\}$. 
By Lemma 5, the degree of $G_1^\Omega$ is 3 or $2^m$ for some integer $m \geq 2$. 
In the first case, the degree of $G^\Omega$ is 4 by Theorem 9.1 of \cite{W}. 
Therefore $M$ has exactly 4 cusps. 

Now assume that the second case holds. 
Then the degree of $G_1^\Omega$ is at least $4$. 
Hence, the order of $G_1^\Omega$ is at least 24 by Theorem 9.3 of \cite{W}. 
Let $K$ be the kernel of the projection  $\pi: {\rm Aff}(L) \to ({\rm GL}(2,\integers)$ or $(\integers/2\integers)^2$). 
Then $K\cap G_1$ is an abelian normal subgroup of $G_1$ 
that projects to an abelian normal subgroup $N$ of $G_1^\Omega$. 
Assume first that $N$ is trivial. 
Then the finite subgroup $G_1/(K\cap G_1) \cong G_1K/K$ of ${\rm GL}(2,\integers)$ 
or $(\integers/2\integers)^2$ projects onto $G_1^\Omega$. 
Hence, the order of $G_1^\Omega$ is at most 12 by Lemma 2, which is a contradiction. 
Therefore $N$ is nontrivial. 

The group $N$ is an elementary 2-group of rank at least 2 by Lemmas 4 and 5. 
The group $K\cap G_1$ has rank at most 2 by Lemma 6. 
Hence, the rank of $N$ is at most 2. 
Therefore, the rank of $N$ is 2 and the degree of $G_1^\Omega$ is 4. 
Hence,  $G_1^\Omega = S_4$, since $G_1^\Omega$ has order at least 24. 
Therefore $G^\Omega$ is 5-transitive by Theorem 9.1 of \cite{W}. 
However, $G^\Omega$ is not 5-transitive by assumption, 
and so the second case does not occur. Therefore $M$ has exactly 4 cusps. 
For an example of such a manifold, see Example 2 in \S 6. 
\end{proof}

\section{On 3-Transitive Manifolds that are not 4-Transitive} 

\begin{theorem}\label{T:4} 
Let $M$ be a cusped hyperbolic 3-manifold of finite volume such that  
the induced action of the group of isometries of $M$ 
on the set of its cusps is $3$-transitive but not $4$-transitive.   
Then the possible numbers of cusps of 
$M$ are $3, 5, 6$ and $8$,  and 
the corresponding permutation groups are $S_3, A_5, \mathrm{PGL}(2,5)$ and $\mathrm{PGL}(2,7)$  
respectively, of orders $6, 60, 120$ and $336$ respectively. 
\end{theorem}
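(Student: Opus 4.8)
The plan is to push the argument of Theorems~2 and~3 up one transitivity level. Keep the notation: $G=\mathrm{Isom}(M)$, $\Omega$ the set of cusps, $G^\Omega$ the induced permutation group of degree $n=|\Omega|$, and for a fixed cusp $C$ let $G_1\le G$ be its stabilizer, $G_1^\Omega\le G^\Omega$ the corresponding point stabilizer, and $L$ a link of $C$, so that by Lemma~1 we may view $G_1\le\mathrm{Aff}(L)$ with the exact sequence of \cite{R-T-I}, whose kernel $K$ is abelian and whose quotient is $\mathrm{GL}(2,\integers)$ (if $L$ is a torus) or $(\integers/2\integers)^2$ (if $L$ is a Klein bottle). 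As in the proof of Theorem~2, $\pi(G_1)$ is then a finite, hence solvable (Lemma~2), group of order at most $12$, so $G_1$, and therefore $G_1^\Omega$, is solvable; and since $G^\Omega$ is $3$-transitive, $G_1^\Omega$ is a solvable $2$-transitive group of degree $n-1$. By Lemma~4, $G_1^\Omega$ has a unique minimal normal subgroup $N$, which is regular, elementary abelian of order $n-1=p^m$, and is moreover the unique nontrivial abelian normal subgroup of $G_1^\Omega$; writing $H$ for a point stabilizer of the action of $G_1^\Omega$ on $N$, we have $G_1^\Omega=N\rtimes H$ with $H\hookrightarrow\mathrm{GL}(m,p)$ acting transitively on $N\setminus\{0\}$, so $n-2\mid|H|$.

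Next I would pin down $N$ and $H$ geometrically. The subgroup $K\cap G_1$ is abelian and normal in $G_1$, so its image $\bar N$ in $G_1^\Omega$ is an abelian normal subgroup, whence $\bar N=1$ or $\bar N=N$ by Lemma~4. If $\bar N=1$, then $G_1^\Omega$ is a quotient of $\pi(G_1)$, so $|G_1^\Omega|\le12$, and $2$-transitivity of degree $n-1$ forces $(n-1)(n-2)\le12$, i.e. $n\le5$. If $\bar N=N$, then $N$ is a quotient of $K\cap G_1$, which has rank at most $2$ by Lemma~6, so $m\le2$; and $H\cong G_1^\Omega/N\cong G_1/(G_1\cap K)$ is a quotient of the finite subgroup $\pi(G_1)$ of $\mathrm{GL}(2,\integers)$ (or of $(\integers/2\integers)^2$). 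Since every finite subgroup of $\mathrm{GL}(2,\integers)$ is conjugate into one of the dihedral groups of order $8$ or $12$ (Lemma~2), the isomorphism types of such quotients form a short explicit list (the cyclic groups of orders $1,2,3,4,6$, the Klein four-group, $S_3$, $D_8$, $D_{12}$); in particular the only one of order $8$ is $D_8$, there is none of order $10$, and the only one of order $12$ is the non-cyclic group $D_{12}$. I would then run through the prime powers $p^m\le13$ allowed by $n-2=p^m-1\mid|H|\le12$ together with $m\le2$, namely $p^m\in\{2,3,4,5,7,8,9,11,13\}$: the value $8$ is excluded by Lemma~6 (it would force $N$ of rank $3$); the values $11$ and $13$ are excluded since then $H$ is cyclic of order $10$, resp.\ $12$, and no admissible quotient is cyclic of order $10$ or $12$; and $9$ is excluded because it forces $H\cong D_8\hookrightarrow\mathrm{GL}(2,3)$ acting regularly on $\mathbb F_3^2\setminus\{0\}$, which is impossible since the unique faithful $2$-dimensional representation of $D_8$ contains reflections fixing a line. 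The case $p^m=4$ with $H=S_3$ is ruled out because then $G_1^\Omega\cong S_4$ is not solvable, leaving $H\cong\integers/3$. Thus $p^m\in\{2,3,4,5,7\}$, so $n\in\{3,4,5,6,8\}$, and in each surviving case one finds $|H|=n-2$, hence $|G_1^\Omega|=(n-1)(n-2)$ and $|G^\Omega|=n(n-1)(n-2)$; in particular $G^\Omega$ is sharply $3$-transitive of degree $n$.

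To conclude, I would invoke the classical classification of sharply $3$-transitive finite permutation groups (only degrees $n\le8$ being needed, by the bound just obtained, so this step is elementary): a sharply $3$-transitive group of degree $q+1$ is $\mathrm{PGL}(2,q)$, the near-field exceptions first appearing at degree $10$ and so not occurring. Hence $G^\Omega\cong\mathrm{PGL}(2,n-1)$, i.e.\ $S_3$, $S_4$, $A_5$, $\mathrm{PGL}(2,5)$, $\mathrm{PGL}(2,7)$ for $n=3,4,5,6,8$ respectively. The case $n=4$ is then excluded because $\mathrm{PGL}(2,3)\cong S_4$ is $4$-transitive, contrary to hypothesis; and in each of the remaining cases $\mathrm{PGL}(2,q)$ with $q\in\{2,4,5,7\}$ is $3$- but not $4$-transitive, of order $6,60,120,336$. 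So exactly the values $n=3,5,6,8$ survive, with the stated permutation groups. The Klein-bottle case is handled identically with $(\integers/2\integers)^2$ in place of $\mathrm{GL}(2,\integers)$ and Lemma~6 applied with $n=1$; the resulting bounds are strictly stronger and contribute only $n\in\{3,4\}$, so no new cusp numbers arise.

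The step I expect to be the main obstacle is the exclusion of $n=10$, i.e.\ the case $|N|=9$: Lemma~6 alone does not suffice, since rank $2$ is permitted, and one must combine the arithmetic ($|H|=8$ and $H$ regular on $8$ points) with the structural fact that the only order-$8$ quotient of a finite subgroup of $\mathrm{GL}(2,\integers)$ is $D_8$ and with the representation-theoretic fact that $D_8$ admits no free action on $\mathbb F_3^2\setminus\{0\}$. A secondary subtlety is the final identification of $G^\Omega$ from the information that it is sharply $3$-transitive of a given degree, where one appeals to classical small-degree multiply transitive group theory; one must also remember to dispose of the Klein-bottle case and to verify that all four surviving permutation groups really are $3$- but not $4$-transitive.
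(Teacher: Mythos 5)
Your proposal follows the paper's strategy in all essentials (solvability of the cusp stabilizer via $\mathrm{Aff}(L)$, Lemma~4 producing the regular elementary abelian subgroup $N$ of order $n-1=p^m$, the rank bound $m\le 2$ from Lemma~6, the arithmetic constraint $p^m-1\le |G_1^\Omega/N|\le 12$ from Lemma~2, and the final identification via sharply $3$-transitive groups), but one step fails as written: you exclude $p^m=4$ with $H\cong S_3$ on the grounds that $G_1^\Omega\cong S_4$ ``is not solvable.'' The group $S_4$ \emph{is} solvable, so this gives no contradiction. The correct exclusion, and the one the paper uses, is that $S_4$ is $4$-transitive of degree $4$, so by Theorem 9.1 of Wielandt $G^\Omega$ would be $5$-transitive, contradicting the hypothesis that it is not $4$-transitive; you invoke exactly this mechanism for $n=4$, so the repair is immediate. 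Two smaller gaps: the branch $\bar N=1$ is left at ``$n\le 5$'' and never closed (one must still observe that a $2$-transitive group of degree $3$ or $4$ which is a quotient of a subgroup of $D_8$ or $D_{12}$, hence cyclic or dihedral, is forced to be $S_3$ in degree $3$ --- again killed by $4$-transitivity of $G^\Omega$ --- and cannot exist in degree $4$ since $A_4$ and $S_4$ are neither cyclic nor dihedral, so only $n=3$ survives there); and the embedding $H\hookrightarrow\mathrm{GL}(m,p)$ with $H$ acting freely on $N\setminus\{0\}$ deserves a sentence of justification via Theorem 11.2 of Wielandt, which the paper supplies. None of this changes your final list.

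Where you genuinely depart from the paper is the exclusion of $p^m=9$, and your route is arguably cleaner. The paper invokes the Huppert--Cameron classification to place $G_1^\Omega$ inside $\Gamma(9)=\mathrm{A\Gamma L}(1,9)$ as an index-$2$ subgroup and then computes by hand that its point stabilizer is cyclic of order $8$ or quaternion, hence not dihedral, hence not an order-$8$ quotient of a finite subgroup of $\mathrm{GL}(2,\integers)$. You instead note that the only order-$8$ quotient available is $D_8$, which would have to act freely on the eight nonzero vectors of $N\cong(\integers/3\integers)^2$; this is impossible because $D_8$ has five involutions while every involution of $\mathrm{GL}(2,3)$ other than $-I$ is conjugate to $\mathrm{diag}(1,-1)$ and so fixes a nonzero vector. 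This avoids the degree-$9$ classification entirely and is correct. Your handling of $p^m=11,13$ (no cyclic quotient of order $10$ or $12$) likewise substitutes a direct argument for the paper's appeal to Huppert's theorem in the prime-degree case, at the cost of needing the faithfulness remark above.
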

\begin{proof}
We continue with the notation in the proofs of Theorem 2 and 3.   
Then as in the proof of Theorem 1, we have that $G_1^\Omega$ is solvable  
and $G_1^\Omega$ is 2-transitive on $\Omega-\{C\}$.

Let $K$ be the kernel of the projection  $\pi: {\rm Aff}(L) \to ({\rm GL}(2,\integers)$ or $(\integers/2\integers)^2$). 
Then $K\cap G_1$ is an abelian normal subgroup of $G_1$ 
that projects to an abelian normal subgroup $N$ of $G_1^\Omega$. 

Assume first that $N$ is trivial. 
Then the finite subgroup $G_1/(K\cap G_1) \cong G_1K/K$ of ${\rm GL}(2,\integers)$ 
or $(\integers/2\integers)^2$ projects onto $G_1^\Omega$. 
Hence, $G_1^\Omega$ is isomorphic to a subgroup of a dihedral group of order 8 or 12 by Lemma 2.
Therefore $G_1^\Omega$ is a cyclic group of order $2, 3, 4, 6$ or a dihedral group 
of order $4, 6, 8, 12$. 
By Lemma 4(5), the group $G_1^\Omega$ has a unique nontrivial abelian normal subgroup. 
Hence, $G_1^\Omega$ is either a group of order $2$ or $3$ or a dihedral group of order 6. 

Suppose that $G_1^\Omega$ has order 2. 
Then $G_1^\Omega$ has degree 2, and so the degree of $G^\Omega$ is 3 by Theorem 9.1 of \cite{W}.  
Hence $M$ has 3 cusps and $G^\Omega = S_3$. 

Suppose that $G_1^\Omega$ has order 3. 
Then $G_1^\Omega$ has degree 3. 
Now 6 divides $|G_1^\Omega|$ by Theorem 9.3 of \cite{W}, which is a contradiction.  
Hence, this case does not occur. 

Suppose that $G_1^\Omega$ is a dihedral group of order 6. 
Then $G_1^\Omega$ has degree 3, and so $G_1^\Omega = S_3$. 
Hence, $G_1^\Omega$ is 3-transitive. 
Therefore $G^\Omega$ is 4-transitive by Theorem 9.1 of \cite{W}, 
but $G^\Omega$ is not 4-transitive by assumption, 
and so this case does not occur. 

Now assume that $N$ is nontrivial. 
Then $N$ is an elementary abelian $p$ group of order $p^m = \mathrm{deg}(G_1^\Omega)$ 
for some prime number $p$ and some positive integer $m$ by Lemma 4. 
The order of $G_1^\Omega$ is divisible by $p^m(p^m-1)$ by Theorem 9.3 of \cite{W}. 
The finite group $G_1/(K\cap G_1)\cong G_1K/K$ is isomorphic to a subgroup of $\mathrm{GL}(2,\integers)$ 
or $(\integers/2\integers)^2$, and $G_1/(K\cap G_1)$ projects onto $G_1^\Omega/N$. 
Therefore $p^m - 1 \leq |G_1^\Omega/N| \leq 12$. 
The group $K\cap G_1$ has rank at most 2 by Lemma 6, 
and $K\cap G_1$ projects onto $N$. 
Therefore $m = \mathrm{rank}(N)$ is 1 or 2. 

Assume first that $m = 1$.  
Then $G_1^\Omega$ has degree $p$. 
The solvable 2-transitive finite permutation groups were classified by Huppert \cite{H} 
(see also \S7 of Chapter XII of \cite{H-B}).  Let $F_q$ be the finite field of order $q$. 
According to Huppert \cite{H} and by Theorem 12 in Chapter VII of \cite{L}, the group $G_1^\Omega$ is isomorphic to a subgroup of the group of affine transformation of the vector space $F_p$ over $F_p$,  
$$\Gamma(p) =\{\alpha: F_p \to F_p: \alpha(x) = ax + b\ 
\hbox{with}\ a, b\in F_p\ \hbox{and}\ a\neq 0\}.$$
The group $\Gamma(p)$ is an extension of the additive group $F_p^+$ of the field $F_p$ 
by the multiplicative group $F_p^*$ of $F_p$, that is, we have an short exact sequence 
$$0 \to F_p^+\ {\buildrel \iota \over \longrightarrow}\ \Gamma(p)\ {\buildrel \eta \over \longrightarrow}\ F_p^* \to 1.$$
where $\iota(b) = x + b$ and $\eta(ax+b) = a$. 
Therefore $|\Gamma(p)| = p(p-1)$. 
As $|G_1^\Omega| \geq p(p-1)$, we have that $G_1^\Omega \cong \Gamma(p).$
Hence $G_1^\Omega/N \cong  F_p^*$, and so $G_1^\Omega/N$ is a cyclic group of order $p-1$ by Theorem 11 of Chapter VII of \cite{L}.

The group $G_1/(K\cap G_1)$ is either a finite cyclic group of order at most 6 or 
a finite dihedral group of order at most 12. 
As $G_1/(K\cap G_1)$ projects onto $G_1^\Omega/N$, we have that 
$G_1^\Omega/N$ is a cyclic group of order at most 6. 
Therefore $p = 2, 3, 5$ or $7$. 

Assume that $p = 2$, then $G_1^\Omega$ has order 2. 
Then $G_1^\Omega$ has degree 2, and so the degree of $G^\Omega$ is 3 by Theorem 9.1 of \cite{W}.  
Hence $M$ has 3 cusps and $G^\Omega = S_3$. 
For an example of such a manifold, see Example 3 in \S 6. 

Assume that $p = 3$.  Then $G_1^\Omega$ has degree 3 and $G_1^\Omega = S_3$. 
Hence $G_1^\Omega$ is 3-transitive. 
Therefore $G^\Omega$ is 4-transitive by Theorem 9.1 of \cite{W}. 
However $G^\Omega$ is not 4-transitive by assumption. 
Therefore the case $p = 3$ does not occur. 

Assume that $p = 5$.  Then the degree of  $G^\Omega$ is $6$, 
and so $M$ has 6 cusps.  As $|G^\Omega_1| = 5\cdot 4$, 
we have that $|G^\Omega| = 6\cdot 5 \cdot 4$. 
Therefore $G^\Omega$ is sharply 3-transitive. 
By Theorem 2.6 of Chapter XI of \cite{H-B}, we have that $G^\Omega = \mathrm{PGL}(2,5)$. 
For an example of such a manifold, see Example 4 in \S6. 


Assume that $p = 7$. Then the degree of  $G^\Omega$ is $8$, 
and so $M$ has 8 cusps. As $|G^\Omega_1| = 7\cdot 6$, 
we have that $|G^\Omega| = 8\cdot 7 \cdot 6$. 
Therefore $G^\Omega$ is sharply 3-transitive. 
By Theorem 2.6 of Chapter XI of \cite{H-B}, we have that $G^\Omega = \mathrm{PGL}(2,7)$. 
For an example of such a manifold, see Example 5 in \S 6. 


Now assume that $m = 2$. As $p^2 - 1 \leq 12$, we have that $p = 2$ or 3. 

Assume that $p = 2$.  Then $G_1^\Omega$ is a 2-transitive permutation group of degree $p^2 = 4$
and order a multiple of $p^2(p^2-1) = 12$. Hence $G_1^\Omega$ is either $A_4$ or $S_4$. 
If $G_1^\Omega$ were $S_4$, then $G_1^\Omega$ would be 4-transitive and $G^\Omega$ 
would be 5-transitive, which is contrary to assumption. 
Therefore $G_1^\Omega = A_4$. 
Now $G^\Omega$ has degree 5 and has order $5|G_1^\Omega| = 60$. 
Hence $M$ has 5 cusps and $G^\Omega = A_5$. 
For an example of such a manifold, see Example 6 in \S 6.


The remaining part of the proof is devoted to proving that the case $m = 2$ and $p = 3$ does not occur.  
On the contrary, assume $m = 2$ and $p = 3$. 
Then the order of $G_1^\Omega/N$ is a multiple of $p^2-1 = 8$. 
As $|G_1^\Omega/N| \leq 12$, we have that $|G_1^\Omega/N| = 8$.
Hence $|G_1^\Omega|= p^2(p^2-1) = 72$. 
From Table 7.3 of \cite{C}, we deduce that either $G_1^\Omega$ is isomorphic to a subgroup of 
$$\Gamma(9) = \{\alpha:F_9 \to F_9: \alpha(x) =ax^{3^c} +b\ \hbox{with}\ a, b\in F_9, a\neq 0\ \hbox{and}\ c = 0,1\}.$$
or $|G_1^\Omega/N| \geq |\mathrm{SL}(2,3)|$. 
As $|\mathrm{SL}(2,3)| = 24 > 8$, the latter case is not possible. 
Therefore $G_1^\Omega$ is isomorphic to a subgroup of $\Gamma(9)$. 

Consider the following subgroup of $\Gamma(9)$:
$$\mathrm{\Gamma L}(1,9) = \{\alpha:F_9 \to F_9: \alpha(x) =ax^{3^c}\ \hbox{with}\ a\in F_9^\ast \ \hbox{and}\ c = 0,1\}.$$
We have a short exact sequence
$$0 \to F_9^+\ {\buildrel \iota \over \longrightarrow}\ \Gamma(9)\ {\buildrel \eta \over \longrightarrow}\ \mathrm{\Gamma L}(1,9)\to 1.$$
where $\iota(b) = x + b$ and $\eta(ax^{3^c}+b) = ax^{3^c}$. 
Moreover, we have a short exact sequence 
$$0 \to F_9^*\ {\buildrel \kappa \over \longrightarrow}\ \mathrm{\Gamma L}(1,9)\ {\buildrel \rho\over \longrightarrow}\ \integers/2\integers \to 1$$
where $\kappa(a) = ax$ and $\rho(ax^{3^c}) = c\integers$, since the Frobenius automorphism 
$x^3$ of $F_9$ has order  2. 
Therefore $\Gamma(9)$ is a solvable group of order $9\cdot 8\cdot 2 = 144$. 

The group $\Gamma(9)$ acts effectively on the set $F_9$, by evaluation, as a 2-transitive permutation group. 
Hence $\Gamma(9)$ has a unique, nontrivial, abelian, normal subgroup of order 9 by Lemma 4(5). 
Let $T = \{x+ b: b\in F_9\}$ be the translation subgroup of $\Gamma(9)$. 
Then $T$ has order 9 and $T$ is the unique, nontrivial, abelian, normal subgroup of $\Gamma(9)$. 

Let $\Gamma_0(9)$ be the stabilizer of $0$. 
Then 
$$\Gamma_0(9) =\{ax^{3^c}: a\in F_9^* \ \hbox{and}\ c = 0,1\} = \mathrm{\Gamma L}(1,9).$$
The group $\Gamma_0(9)$ acts transitively on $F_9^*$ by Theorem 9.1 of \cite{W}. 
By Theorem 11.2 of \cite{W}, the induced action of $\Gamma_0(9)$ on $T-\{x\}$ by conjugation 
corresponds to the action of $\Gamma_0(9)$ on $F_9^*$; indeed, 
with respect to multiplication given by composition of polynomial functions, we have that 
$$(ax)(x+b)(ax)^{-1} = (ax)(x+b)(a^{-1}x) = (ax)(a^{-1}x+b) = x + ab,$$
and 
$$(x^3)(x+b)(x^3)^{-1} = (x^3)(x+b)(x^3) = (x^3)(x^3+b)=x+b^3.$$

Let $H$ be a subgroup of $\Gamma(9)$ that is isomorphic to $G_1^\Omega$. 
Then the order of $H$ is 72, and so $H$ has index 2 in $\Gamma(9)$. 
Let $A$ be the unique, nontrivial, abelian, normal subgroup of $H$. 
Then $A$ has order $9$. 
We claim that $A = T$. 
On the contrary, suppose $A \neq T$. 
Then $A\cap T$ is an abelian normal subgroup of $H$ properly contained in $A$. 
Therefore $A\cap T =\{x\}$, and so $A$ injects into the group $\Gamma(9)/T$ of order 16, 
which is not the case, since $A$ has order 9. 
Therefore $A = T$ as claimed. 

The group $G_1^\Omega$ acts transitively on $\Omega -\{C\}$ by Theorem 9.1 of \cite{W}. 
Hence, the induced action of $G_1^\Omega$ on $N-\{1\}$ by conjugation is transitive by Theorem 11.2 of \cite{W}. 
Therefore, the induced action of $H$ on $T-\{x\}$ by conjugation is transitive. 
Now the induced action of $T$ on $T-\{x\}$ by conjugation is trivial, since $T$ is abelian. 
The group $H\cap \Gamma_0(9)$ is a set of coset representatives for $T$ in $H$, 
and so the induced action of $H\cap \Gamma_0(9)$ on $T-\{x\}$ by conjugation is transitive. 
As $x^3$ acts trivially on $x+1$ by conjugation and $(ax)(x+1)(ax)^{-1} = x + a$, we conclude that
$$\{a \in F_9^*: ax^{3^c} \in H\ \hbox{for some}\ c = 0, 1\} = F_9^*.$$
The group $F_9^*$ is cyclic by Theorem 11 of \cite{L}.

Assume that $H$ contains $ax$ for some generator $a$ of $F_9^*$. 
Then $H\cap \Gamma_0(9)$ and $H/T$ are cyclic groups of order 8. 
Now assume that $H$ does not contain $ax$ for some generator $a$ of $F_9^*$. 
Suppose that $a$ is a generator of $F_9^*$. 
Then the generators of $F_9^*$ are $a, a^3, a^5, a^7$, and so $H$ contains $ax^3, a^3x^3, a^5x^3, a^7x^3$. As $(ax^3)^2 = a^4x$, $(ax^3)(a^3x^3)= a^2x$, $(ax^3)(a^5x^3) = x$ and $(ax^3)(a^7x^3) = a^6x$, 
we have that 
$$H\cap \Gamma_0(9) =\{x, ax^3, a^2x, a^3x^3, a^4x,a^5x^3, a^6x,a^7x^3\}.$$
Now $\langle ax^3\rangle = \{x, ax^3, a^4x, a^5x^3\}$ and $\langle a^2x\rangle = \{x, a^2x, a^4x, a^6x\}$ are  distinct cyclic groups of order 4. 
Therefore $H\cap\Gamma_0(9)$ is a quaternion group. 
Hence $H/T$ and $G_1^\Omega/N$ are quaternion groups. 
Thus, in either case $G_1^\Omega/N$ is not a dihedral group. 

Now, the group $G_1/(K\cap G_1)$ projects onto $G_1^\Omega/N$ and 
$G_1/(K\cap G_1)$ is either a cyclic group of order at most 6 or a dihedral group of order 
4, 6, 8 or 12.  
As $G_1^\Omega/N$ is not a dihedral group, $G_1^\Omega/N$ must be the image 
of a dihedral group of order 12. 
As 8 does not divide 12, we have a contradiction. 
Therefore, the case $m = 2$ and $p = 3$ does not occur. 
\end{proof}

\section{Examples} 

In this section, we consider examples of multiply transitive cusped hyperbolic 3-manifolds 
that satisfy Theorems 2, 3, and 4.  All the examples are orientable. 
Most of our examples are tessellated by isometric, ideal, regular polyhedra so that 
any face of a polyhedron is the face of exactly two polyhedra of the tessellation. 
Such a tessellation is called a {\it Platonic tessellation}. 

A {\it flag} of a Platonic tessellation is a triple $(P, F, E)$ consisting of a polyhedron $P$ of the tessellation, a face $F$ of $P$, and an edge $E$ of $F$.  We say that an isometry $\phi$ acts on a flag $(P, F, E)$ 
if $(\phi(P), \phi(F), \phi(E))$ is also a flag. 

\vspace{.15in}
\noindent{\bf Example 1.} According to Thurston \cite{T}, the complement in $S^3$ of the minimally twisted 5-link chain shown in Figure 1 is a 5-transitive hyperbolic 3-manifold $M$ of volume 10.149 \ldots . 
SnapPy \cite{C-W} shows that $M$ has a Platonic tessellation consisting of 10 ideal regular tetrahedra. 
The group $G$ of isometries of $M$ acts transitively on the set of flags of this tessellation 
so that the stabilizer of each flag has order 2. 
Hence $G$ has order 240.  The group $G$ is $(\integers/2\integers) \times S_5$. The generator of the $\integers/2\integers$ factor is orientation-preserving and acts trivially on the set $\Omega$ of cusps of $M$, and the $S_5$ factor acts 5-transitively on $\Omega$. 
The group of orientation-preserving isometries of $M$ acts only 3-transitively on $\Omega$.  

\begin{figure}[t] 
\centering
{\includegraphics[width=1.6in]{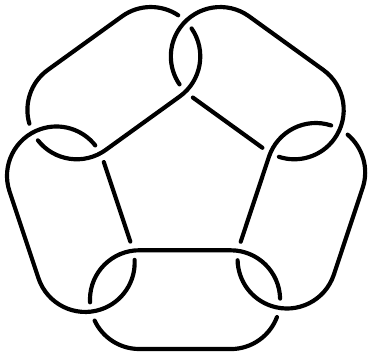}}
\caption{A minimally twisted 5-link chain}
\end{figure}

\vspace{.15in}
\noindent{\bf Example 2.} According to Vogeler \cite{V}, the complement in $S^3$ of the link shown in Figure 2 is a 4-transitive hyperbolic 3-manifold $M$.  Using SnapPy \cite{C-W}, we found that $M$ has volume 24.092 \ldots and the group $G$ of isometries of $M$ is $(\integers/2\integers) \times S_4$. 
The $\integers/2\integers$ factor of $G$ acts trivially on $\Omega$,  
and the $S_4$ factor of $G$ acts 4-transitively on $\Omega$. 

\begin{figure}[t] 
\centering
{\includegraphics[width=1.6in]{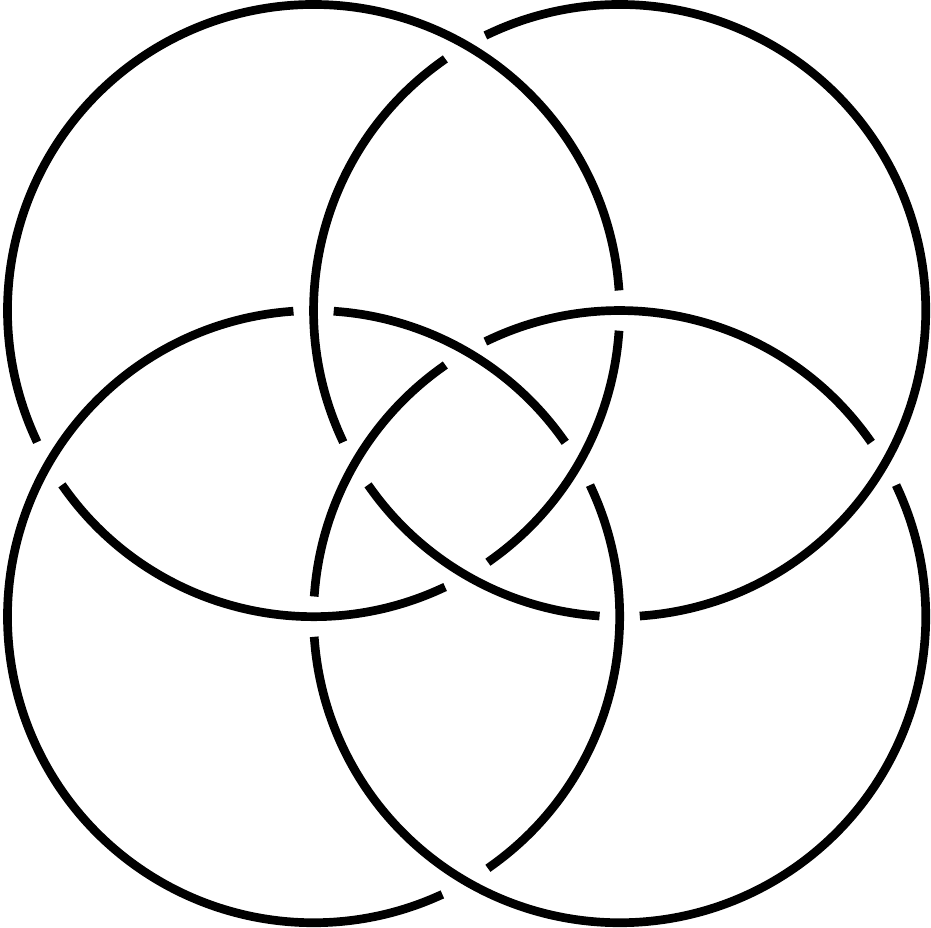}}
\caption{A link whose complement in $S^3$ is 4-transitive}
\end{figure}

\vspace{.15in}
\noindent{\bf Example 3.}
According to Thurston \cite{T79}, the complement in $S^3$ of the Borromean rings shown in Figure 3 is 
a hyperbolic 3-manifold $M$ of volume 7.327 \ldots that has a Platonic tessellation consisting of 2 ideal regular octahedra.  
The group $G$ of isometries of $M$ acts freely and transitively on the set of flags of this 
tessellation.  Hence $G$ has order 48.  

The group $G$ is a semi-direct product 
$(\integers/2\integers)^3 \rtimes S_3$ with $S_3$ acting 3-transitively 
on the standard basis of the vector space $(\integers/2\integers)^3$. 
The subgroup of $G$ that stabilizes both octahedra is 
the semi-direct product $(\integers/2\integers)^3 \rtimes C_3$ where $C_3$ is the cyclic subgroup of $S_3$ 
of order 3. 

If we consider the ideal regular octahedron $P$ in the conformal ball model $B^3$ of hyperbolic 3-space with ideal vertices $\pm e_1, \pm e_2, \pm e_3$, where $\{e_1, e_2, e_3\}$ is the standard basis of $\realnos^3$, then the action of the standard basis vectors of $(\integers/2\integers)^3$ on either octahedra of the tessellation of $M$ correspond 
to the action of the reflections in the coordinate hyperplanes of $B^3$ on $P$. 

The elementary group $(\integers/2\integers)^3$ acts trivially on the set $\Omega$ of cusps of $M$.  
The quotient group $G^\Omega = S_3$ acts 3-transitively on $\Omega$.

\begin{figure}[b] 
\centering
{\includegraphics[width=1.6in]{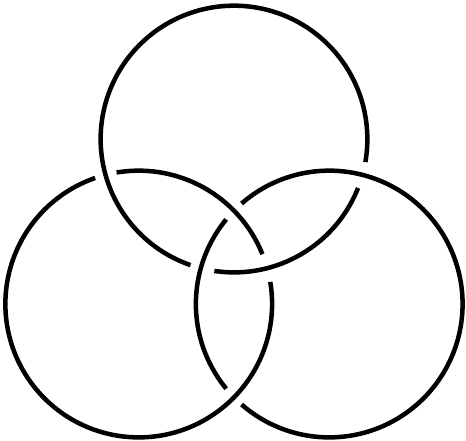}}
\caption{The Borromean rings}
\end{figure}

\vspace{.15in}
\noindent{\bf Example 4.} According to Goerner \cite{G}, the complement in $S^3$ of the link shown in Figure 4 is a hyperbolic 3-manifold $M$ that has a Platonic tessellation consisting of 5 ideal regular octahedra. 
The volume of $M$ is $18.319 \ldots .$ 
The group $G$ of isometries of $M$ acts freely and transitively on the set of flags of this tessellation. 
Hence $G$ has order 120. Every isometry of $M$ is orientation-preserving. 
The group $G$ acts effectively and sharply 3-transitively on the set of 6 cusps of $M$, 
and therefore $G = {\rm PGL}(2,5)$ by Theorem 2.6 of Chapter XI of \cite{H-B}. 
Although $G \cong S_5$, we write $G = {\rm PGL}(2,5)$ to indicate that $G$ is a degree 6 permutation group according to its natural representation as a permutation group on the set of the six 1-dimensional vector subspaces of the vector space $F_5\oplus F_5$ over the field $F_5$.

\begin{figure}[t] 
\centering
{\includegraphics[width=1.6in]{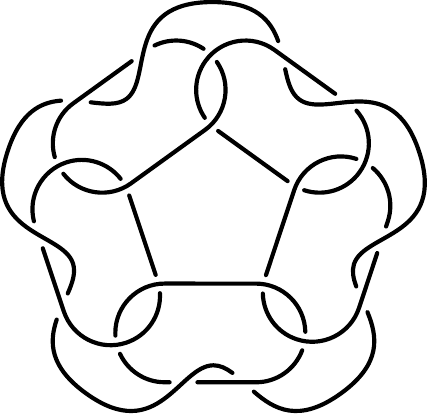}}
\caption{The congruence link $(1,\langle 2 + i\rangle)$}
\end{figure}

\vspace{.15in}
\noindent{\bf Example 5.} According to Thurston \cite{T}, the complement in $S^3$ of the link shown in Figure 5 is a hyperbolic 3-manifold $M$ with extraordinary 336-fold symmetry and volume $28.418 \ldots$ . 
According to Goener \cite{G}, the manifold $M$ has a Platonic tessellation consisting of 28 ideal regular tetrahedra, and the group $G$ of isometries of $M$ acts freely and transitively on the set of flags of this 
tessellation. Hence $G$ has order 336.  Every isometry of $M$ is orientation-preserving.  
The group $G$ acts effectively and sharply 3-transitively on the set of 8 cusps of $M$,  and so $G = {\rm PGL}(2,7)$ by Theorem 2.6 of Chapter XI of \cite{H-B}. 

\begin{figure}[b] 
\centering
{\includegraphics[width=1.6in]{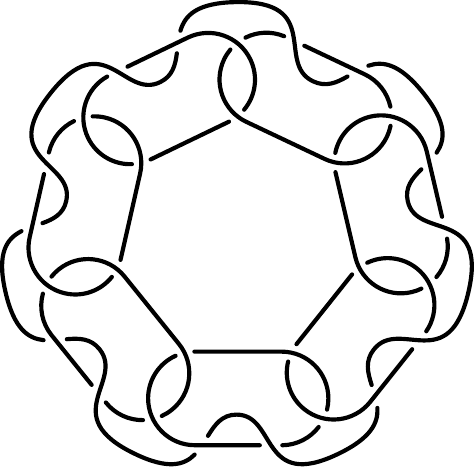}}
\caption{Thurston's congruence link}
\end{figure}

\vspace{.15in}
\noindent{\bf Example 6.} 
Our last example is an orientable hyperbolic 3-manifold $M$ that is obtained by gluing together two ideal regular dodecahedra along their faces.  Each face of the first dodecahedron is glued to the opposite face of the second dodecahedron with a twist of $\pi/5$ as in Figure 10.1.1 of \cite{R}.  
The manifold $M$ has volume $41.160 \ldots$ and exactly 5 cusps. 
The group $G$ of isometries of $M$ acts freely and transitively on the set of flags of this Platonic tessellation. Hence $G$ has order 120.  Every isometry of $M$ is orientation-preserving. 
This implies that $M$ is manifold {\bf odode02\_00912} in Goerner's census of double-dodecahedral cusped hyperbolic 3-manifolds (see Tables 4 and 10 of \cite{G2}). 
The manifold $M$ is a homology link, 
but $M$ is not the complement of a link in $S^3$ by the main theorem of Goener's paper \cite{G}.

The group $G$ is $(\integers/2\integers)\times A_5$.  
The subgroup of $G$ that stabilizes both dodecahedra is the $A_5$ factor of $G$. 
The generator of the $\integers/2\integers$ factor of $G$ acts trivially on the set $\Omega$ of cusps of $M$, and the $A_5$ factor of $G$ acts effectively and sharply 3-transitively on $\Omega$.

\section{Final Remarks}

We conclude with some natural problems that this paper suggests. 

\vspace{.15in}
\noindent{\bf Problem 1:} 
Classify all the hyperbolic 3-manifolds that satisfy Theorems 2, 3, or \nolinebreak 4.  

\vspace{.15in}
\noindent In this regard, we ask the following more specific questions:

\vspace{.15in}
\noindent{\bf Question 1:} Is the complement in $S^3$ of the minimally twisted 5-link chain the only hyperbolic 3-manifold that satisfies Theorem 2?

\vspace{.15in}
\noindent{\bf Question 2:} Are there any nonorientable hyperbolic 3-manifolds that satisfy 
Theorems 2, 3, or 4?

\vspace{.15in}
\noindent{\bf Problem 2:}
Determine the possible numbers of cusps of $2$-transitive cusped hyperbolic 3-manifolds of finite volume. 

\vspace{.15in}
\noindent According to Vogeler \cite{V},  there exists a $2$-transitive hyperbolic 3-manifold of finite volume with exactly $q$ cusps for each prime power $q$. 

\vspace{.15in}
\noindent{\bf Acknowledgments} 

\vspace{.15in}
\noindent We thank Matthias Goerner for providing us with graphic files for Figures 1, 4, and 5,  
that first appeared in \cite{B-G-R}, and for helpful communications concerning 
his hyperbolic 3-manifold {\bf odode02\_00912}.

\end{document}